\def\ColourDiagrams{---}
\theoremstyle{plain}
\newtheorem{proposition}{Proposition}[section]
\newtheorem{theorem}[proposition]{Theorem}
\newtheorem{corollary}[proposition]{Corollary}
\newtheorem{lemma}[proposition]{Lemma}
\newtheorem{conjecture}[proposition]{Conjecture}
\def\msig{\mc{F}^\sigma_m}
\def\mc{\mathcal}
\def\F{\mathcal{F}}
\def\tr{\textrm}
\begin{document}

\title{A solution to the 2/3 conjecture}

\author{Rahil Baber
\and John Talbot\thanks{Department of Mathematics, UCL, London, WC1E
6BT, UK. Email: j.talbot@ucl.ac.uk.  }}

\date\today

\maketitle

\begin{abstract}
We prove a vertex domination conjecture of Erd\H os, Faudree, Gould,
Gy\'arf\'as, Rousseau, and Schelp, that for every $n$-vertex
complete graph with edges coloured using three colours there exists
a set of at most three vertices which have at least $2n/3$
neighbours in one of the colours. Our proof makes extensive use of the ideas
presented in ``A New Bound for the 2/3 Conjecture'' by Kr\'al', Liu,
Sereni, Whalen, and Yilma.
\end{abstract}

\section{Introduction}

In this paper we prove the $2/3$ conjecture of Erd\H os, Faudree,
Gould, Gy\'arf\'as, Rousseau, and Schelp \cite{Erdos_22Set}. Before
we discuss this problem we first require some definitions.

A \emph{graph} is a pair of sets $G = (V(G),E(G))$ where
$V(G)$ is the set of \emph{vertices} and $E(G)$ is a family of
$2$-subsets of $V(G)$ called \emph{edges}. A \emph{complete graph}
is a graph containing all possible edges.

An \emph{$r$-colouring} of the edges of a graph $G$ is a map from
$E(G)$ to a set of size $r$. Given an $r$-colouring of the edges of
a complete graph $G=(V(G),E(G))$, a colour $c$, and $A,B\subseteq
V(G)$, we say that $A$ \emph{$c$-dominates} $B$ if for every $b\in
B\setminus A$ there exists $a\in A$ such that the edge $ab$ is
coloured $c$. We say that $A$ \emph{strongly $c$-dominates} $B$ if
for every $b\in B$ there exists an $a\in A$ such that the edge $ab$
is coloured $c$. Note that if $A$ strongly $c$-dominates $B$ then it
also
$c$-dominates $B$.

Erd\H os and Hajnal \cite{Hajnal} showed that given a positive integer $t$, a real value
$\epsilon>0$, and a $2$-coloured complete graph on $n>n_0$ vertices,
there exists a set of $t$ vertices that $c$-dominate at least
$(1-(1+\epsilon)(2/3)^t)n$ vertices for some colour $c$. They asked
whether $2/3$ could be replaced by $1/2$, which was answered by
Erd\H os, Faudree, Gy\'arf\'as, and Schelp \cite{Erdos_2Colour}.

\begin{theorem}[Erd\H os, Faudree, Gy\'arf\'as, and Schelp
\cite{Erdos_2Colour}]\label{THM:2Col} For any positive integer $t$,
and any $2$-coloured complete graph on $n$ vertices, there exists a
colour $c$ and a set of at most $t$ vertices that $c$-dominate at
least $(1-1/2^t)n$ of the vertices.
\end{theorem}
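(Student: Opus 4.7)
My plan is to prove Theorem~\ref{THM:2Col} by induction on $t$. For the base case $t=1$, observe that in any $2$-edge-coloured $K_n$ any vertex $v$ satisfies $d_R(v)+d_B(v)=n-1$, so by pigeonhole some colour $c$ has $d_c(v)\ge (n-1)/2$; then $\{v\}$ $c$-dominates the $1+d_c(v)\ge n/2=(1-1/2^1)n$ vertices of $\{v\}\cup N_c(v)$.

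For the inductive step, assume the statement for $t-1$. Given a $2$-coloured $K_n$, pick a vertex $v$ of maximum monochromatic degree; by colour symmetry we may assume $d_R(v)\ge (n-1)/2$, and write $R=N_R(v)$, $B=N_B(v)$, so that $|B|\le (n-1)/2$. Applying the inductive hypothesis to the induced $2$-colouring of the complete graph on $B$ produces a set $A\subseteq B$ with $|A|\le t-1$ and a colour $c$ such that $A$ $c$-dominates at least $(1-1/2^{t-1})|B|$ vertices of $B$. If $c$ is red, then $A\cup\{v\}$ has size at most $t$ and, using the disjoint decomposition $V=\{v\}\cup R\cup B$,
\[
|D_R(A\cup\{v\})|\ge 1+|R|+\bigl(1-\tfrac{1}{2^{t-1}}\bigr)|B|=n-\tfrac{|B|}{2^{t-1}}\ge\bigl(1-\tfrac{1}{2^{t}}\bigr)n,
\]
which completes this case.

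The delicate case is $c=$ blue, because $v$ contributes nothing new to blue-domination outside $B$. Introduce $R_{\bar B}=\{r\in R:ra\text{ red for every }a\in A\}$ and $B_{\bar R}=\{b\in B\setminus A:ba\text{ blue for every }a\in A\}$; a direct computation yields $|D_B(A\cup\{v\})|=n-|R_{\bar B}|$ and $|D_R(A\cup\{v\})|=n-|B_{\bar R}|$, so closing this case reduces to showing $\min(|R_{\bar B}|,|B_{\bar R}|)\le n/2^{t}$. The inductive hypothesis only bounds the set of vertices of $B\setminus A$ that are all-red-adjacent to $A$ (of size at most $|B|/2^{t-1}$), a quantity disjoint from both $R_{\bar B}$ and $B_{\bar R}$. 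To close this case I would run the inductive hypothesis a second time, now on the induced $2$-colouring of $R$, producing another pair $(A',c')$, and do a case analysis over the four combinations of $(c,c')$; the maximality of $d_R(v)$ and the identity $|R|+|B|=n-1$ should, in each subcase, force at least one of the four candidate sets $A\cup\{v\}$, $A'\cup\{v\}$ to $c''$-dominate $\ge (1-1/2^t)n$ vertices for some colour $c''$. The main obstacle is precisely this colour-switching phenomenon in the induction: the naive inductive statement hands us domination in a single colour, whereas the hard case requires a trade-off between the two colours that is not supplied by the inductive hypothesis alone.
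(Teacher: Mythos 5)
Theorem~\ref{THM:2Col} is not actually proved in this paper: it is quoted from \cite{Erdos_2Colour} (with the remark that the proof there is constructive and yields strong domination), so there is no in-paper argument to compare yours against. Judged on its own terms, your proposal has a genuine gap, and it is exactly the one you name at the end: the subcase where the inductive hypothesis applied to $B$ returns the colour blue. Your base case is fine, the case $c=$ red is fine (the computation $1+|R|+(1-1/2^{t-1})|B|=n-|B|/2^{t-1}\ge n-n/2^{t}$ is correct since $|B|\le (n-1)/2$), and your identities $|D_B(A\cup\{v\})|=n-|R_{\bar B}|$ and $|D_R(A\cup\{v\})|=n-|B_{\bar R}|$ are also correct. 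But the blue case is not a residual technicality to be patched later; it is the entire content of the theorem, and your sketch does not resolve it.

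Moreover, the repair you propose (apply the inductive hypothesis a second time, to $R$, and case-analyse the pair $(c,c')$) does not close it. If $c'$ is blue as well, the natural candidate $A\cup A'$ may have up to $2(t-1)>t$ vertices and in any case only yields an error of order $n/2^{t-1}$, not $n/2^{t}$, while neither $A\cup\{v\}$ nor $A'\cup\{v\}$ alone controls the other half of the graph. If $c'$ is red, then $A'\cup\{v\}$ red-dominates $\{v\}\cup R$ up to at most $|R|/2^{t-1}$ exceptions, but nothing constrains the colours of the edges between $A'$ (or $R$ generally) and $B$: the maximality of $d_R(v)$ gives only $|B|\le(n-1)/2$ and says nothing about the bipartite colouring between $R$ and $B$, so in this subcase a set $B$ of size close to $n/2$ can be missed entirely. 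Thus none of the four candidate sets is forced to work, and the colour trade-off you need is not supplied by running the same hypothesis on $R$; it requires a genuinely different organisation of the argument, one that prevents the colour from being chosen afresh at every level of the recursion. As written, the hard case is acknowledged rather than proved, so the proposal is incomplete.
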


\begin{figure}[tbp]
\begin{center}
\ifdefined\ColourDiagrams
\includegraphics[height=5cm]{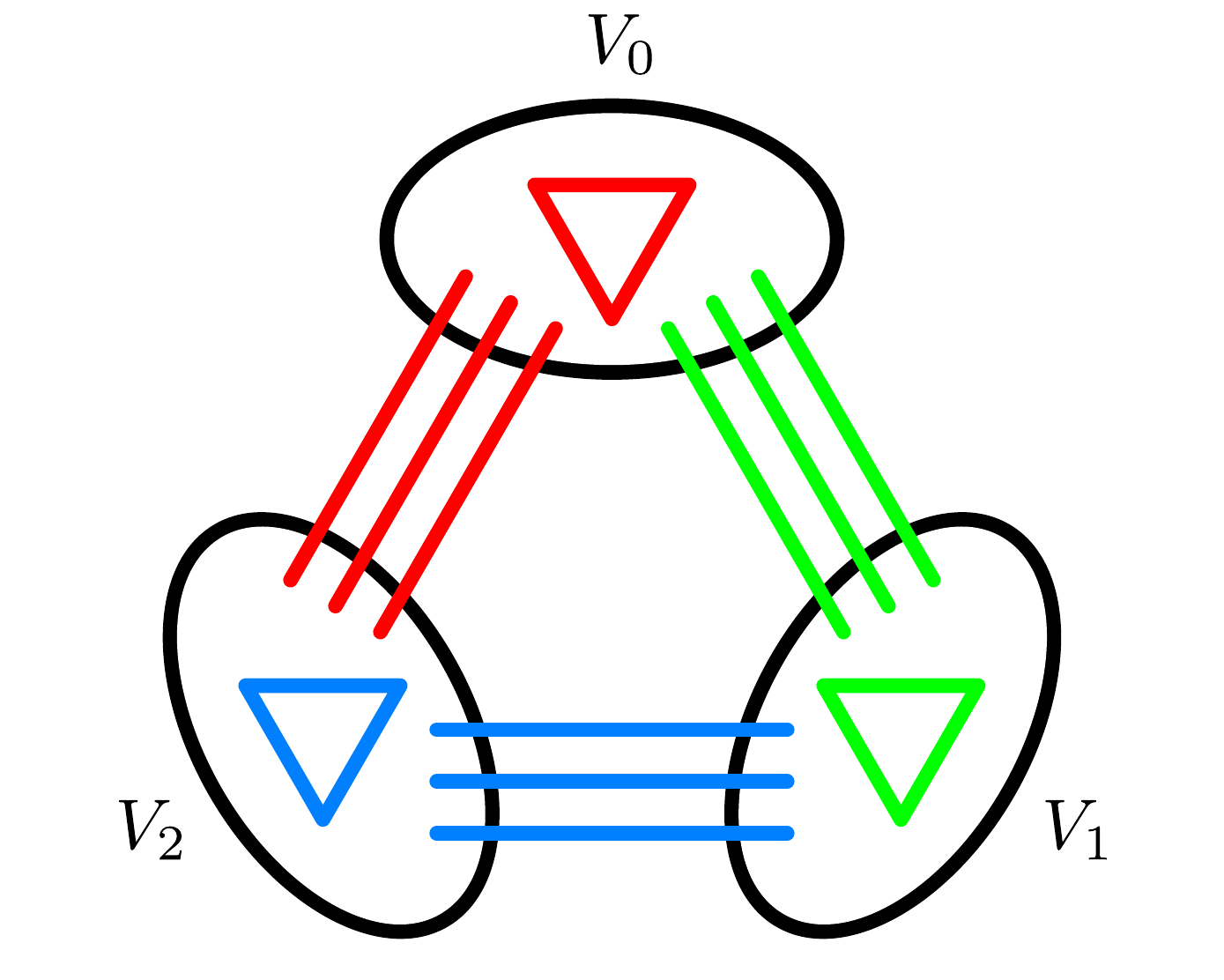}
\caption{Kierstead's construction. The colours $0,1,2$ are
represented by red, green, and blue respectively.}
\else
\includegraphics[height=5cm]{Extremal3_grey}
\caption{Kierstead's construction. The colours $0,1,2$ are
represented by solid black, dashed black, and solid grey lines
respectively.} \fi \label{FIG:Extremal3}
\end{center}
\end{figure}

Erd\H os, Faudree, Gy\'arf\'as, and Schelp went on to ask whether
their result could be generalised to say that all $r$-coloured
complete graphs contain a set of $t$ vertices that $c$-dominate at
least $(1-(1-1/r)^t)n$ of the vertices. However, in the same paper
\cite{Erdos_2Colour}, they presented a construction by Kierstead
showing this to be false even for $r=3$ and $t=3$. Simply partition
the vertices of a complete graph into $3$ equal classes
$V_0,V_1,V_2$, and colour the edges such that an edge $xy$ with
$x\in V_i$ and $y\in V_j$ is coloured $i$ if $i=j$ or $i\equiv
j+1\bmod 3$, see Figure \ref{FIG:Extremal3}. The construction shows
that for $3$-colourings it is impossible for a small set of vertices
to monochromatically dominate significantly more than $2/3$ of the
vertices. (It also shows that regardless of the size of our
dominating set we cannot guarantee more than $\lceil 2n/3 \rceil$
vertices will be strongly $c$-dominated.) Motivated by this example
Erd\H os, Faudree, Gould, Gy\'arf\'as, Rousseau, and Schelp
\cite{Erdos_22Set}, made the following conjecture.

\begin{conjecture}[Erd\H os, Faudree, Gould, Gy\'arf\'as, Rousseau, and Schelp
\cite{Erdos_22Set}]\label{CONJ:3Set}  For any $3$-coloured complete
graph, there exists a colour $c$ and a set of at most $3$ vertices
that $c$-dominates at least $2/3$ of the vertices.
\end{conjecture}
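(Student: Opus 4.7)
The plan is to argue by contradiction: assume there is a $3$-coloured $K_n$ in which no set of at most three vertices $c$-dominates $2n/3$ vertices in any colour~$c$. Writing $d_c(v)$ for the number of colour-$c$ neighbours of~$v$, this immediately forces $d_c(v) < 2n/3 - 1$ for every $v$ and~$c$, since otherwise $\{v\}$ alone would already $c$-dominate $2n/3$ vertices. Similar but more intricate inequalities apply to pairs and triples of vertices, and the argument will consist of combining such local inequalities with a global averaging step.

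The main technical device is a weighted averaging over triples, in the spirit of Kr\'al', Liu, Sereni, Whalen and Yilma. For a triple $T$ and colour~$c$, the number of vertices \emph{not} $c$-dominated by $T$ equals the number of $x\notin T$ all of whose three edges to $T$ avoid colour~$c$. Summing such counts over suitably chosen triples and swapping the order of summation rewrites the total in terms of densities of small coloured sub-configurations (pairs, paths, triangles on at most four vertices), and the failure hypothesis can be recast as an inequality that no such density profile is allowed to satisfy. A naive unweighted averaging falls short of the target bound; the refinement is to pick a family of weights, one for each isomorphism type of coloured triple, that simultaneously handles the three colour classes symmetrically and eliminates the slack present at Kierstead's extremal configuration.

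The principal obstacle is the exact tightness of the bound at the Kierstead construction, which means that averaging alone cannot close the gap. To handle this I would couple the averaging with a stability argument: if the local colour-degree profile of the colouring is far from Kierstead's, the weighted averaging yields $2n/3$ with a positive margin and delivers a good triple directly; if it is close to Kierstead's, then the global structure must be a small perturbation of it, and a direct case analysis shows that some vertex in the ``dense'' colour class already $c$-dominates at least $2n/3$ vertices. The delicate point, and where I expect the bulk of the work to lie, is merging these two regimes into a single argument without any loss in the constant; this is also where the computational framework inherited from the Kr\'al' et al.\ paper, together with a careful bookkeeping of which coloured $4$-vertex configurations contribute to each inequality, is likely to be indispensable.
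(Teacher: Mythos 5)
Your sketch correctly identifies the general framework (contradiction, local degree inequalities, weighted averaging over small configurations, and the observation that naive averaging is stuck because Kierstead's construction is tight), but the mechanism by which the paper escapes the tightness problem is quite different from the stability argument you propose, and I believe your route has a genuine gap.

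The paper's crucial move is to change the quantity being bounded. They do not try to show directly that some triple dominates $2n/3$ vertices and then patch the tight case via stability. Instead, they first prove (Lemma~\ref{LEM:HasRainbow}) that any counterexample $\hat{G}$ must contain a \emph{rainbow vertex}, that is, a vertex $v$ incident to all three colours; this is a short structural argument that handles the case where every vertex sees exactly two colours (which includes Kierstead's construction) and the $2$-coloured case via Theorem~\ref{THM:2Col}. They then blow $\hat{G}$ up to a sequence $G_n$ by replacing each vertex $u$ with a class of $n$ vertices whose internal edges are coloured uniformly at random using the colours in $A_u$, and they target the density $d_X(G_n)$ of one fixed $3$-coloured $K_6$, called $X$, whose colouring is chosen so that $X$ can only occur inside a uniformly $3$-coloured region. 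The rainbow vertex then forces $d_X(G_n)\geq k^{-6}3^{-15}+o(1)>0$ (Corollary~\ref{COR:HasRainbow}). On the other side, Lemma~\ref{LEM:GoodSet} says that ``good sets'' in $G_n$ dominate at most $2/3+o(1)$, Lemma~\ref{LEM:GkFlagConstraint} encodes this as a family of flag-algebra inequalities, and a semidefinite program on $6$-vertex graphs (not $4$-vertex ones) shows $d_X(G_n)=o(1)$, a contradiction. There is no tightness obstruction because at Kierstead's construction $d_X$ is literally zero, and Kierstead's construction was already excluded by the rainbow vertex lemma.

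The gap in your proposal is the stability step. Near Kierstead's colouring, the claim that a vertex in the dense colour class already $c$-dominates $2n/3$ vertices is not obviously true: a small perturbation can remove $\Omega(n)$ of that vertex's colour-$0$ neighbours, and since the target bound is exactly $2n/3$ with no slack, you would have to control such perturbations with exactly the kind of quantitative stability machinery that is very delicate here. The paper's reformulation via $d_X$ sidesteps this entirely. You would also need to move from $4$-vertex to $6$-vertex computations (the earlier Kr\'al'--Liu--Sereni--Whalen--Yilma result on $5$ vertices only reaches a dominating set of size $4$), and to incorporate the additional flag inequalities of Lemma~\ref{LEM:GkFlagConstraint}, generalizing Lemma~3.3 of \cite{Kral_4Set}; the authors single out precisely these two changes as what pushes the bound from $4$ vertices to $3$.
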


They were able to show that the conjecture holds true when it was
relaxed to asking for a dominating set of at most $22$ vertices, but
were unable to reduce $22$ to $3$. We note that $3$ is best
possible because in a typical random 3-colouring of a complete graph of order $n$ no
pair of vertices will monochromatically dominate more than $5n/9 +
o(n)$ vertices (this follows simply from Chernoff's bound). For completeness we should also
mention that in \cite{Erdos_22Set} the authors showed there always exist
$2$ vertices that monochromatically dominate at least $5(n-1)/9$
vertices in a $3$-coloured complete graph.

Kr\'al', Liu, Sereni, Whalen and Yilma \cite{Kral_4Set}, made
significant progress with Conjecture \ref{CONJ:3Set}, by proving
that there exists a colour $c$ and set of size at most $4$ which not
only $c$-dominates but strongly $c$-dominates at least $2/3$ of the
vertices in a $3$-coloured complete graph. Their proof makes use of
Razborov's semidefinite flag algebra method \cite{RF} to show that
Kierstead's construction is essentially extremal. We will discuss
flag algebras in more detail in Section \ref{SEC:FlagAlgebras}.

We verify Conjecture \ref{CONJ:3Set} by proving the following
theorem.

\begin{theorem}\label{THM:MainResult}
For any $3$-colouring of the edges of a complete graph on $n\geq 3$
vertices, there exists a colour $c$ and a set of $3$ vertices that
strongly $c$-dominate at least $2n/3$ vertices.
\end{theorem}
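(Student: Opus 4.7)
The plan is to refine the semidefinite flag algebra method of Kr\'al', Liu, Sereni, Whalen, and Yilma, who proved the analogous statement with a set of size $4$ in place of $3$. I would work in the flag algebra of $3$-edge-coloured complete graphs. For each unordered triple $T$ of vertices and each colour $c$, let $f_c(T)$ denote the fraction of vertices strongly $c$-dominated by $T$; the goal is to show $\max_{T,c} f_c(T) \geq 2/3$ for every 3-edge-coloured $K_n$ with $n \geq 3$. A convenient way to encode this in the flag algebra is via a weighted average: pick for each $T$ a colour $c(T)$ by some combinatorial rule (for instance, the colour maximising $f_c(T)$, or a majority-based choice among edges incident to $T$) and show $\mathbb{E}_T[f_{c(T)}(T)] \geq 2/3$ asymptotically.

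The main computational step is Phase 1: use an SDP in the flag algebra framework to establish the above asymptotic inequality. This extends the approach of Kr\'al' et al.\ but requires richer flag configurations (larger types and more flag sizes) to tighten the bound from the $4$-vertex version to a $3$-vertex version, together with some combinatorial pre-processing so that the SDP has a numerically reasonable size.

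Phase 2 is stability. Kierstead's construction achieves equality simultaneously for essentially every triple (a direct check shows that, regardless of whether the triple has one, two, or three vertices in the same class, the maximum over $c$ of $f_c(T)$ equals $2/3$), so the extremal configurations are exactly, up to edit distance, the Kierstead colourings. One shows that if a 3-coloured $K_n$ is $\varepsilon$-far from Kierstead's construction, then some triple beats the $2/3$ bound with positive slack; while if it is $\varepsilon$-close to Kierstead's, then taking one vertex from each quasi-class and selecting the analogous colour produces a strongly dominating triple of size $3$ meeting the required count.

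The central obstacle is precisely this tightness: at Kierstead's construction the bound is attained with equality, not just asymptotically but for virtually every triple and every colour, so the stability argument has essentially no slack to exploit. The SDP certificate must therefore be numerically sharp, the stability step must classify not merely the exact extremal configurations but a neighbourhood in edit distance, and a separate small-$n$ verification (for the finitely many cases in which the stability-to-exact-bound conversion breaks) is required to complete the proof.
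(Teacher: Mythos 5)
Your proposal follows the classical extremal-plus-stability pipeline, which is genuinely different from what the paper does, and the obstacle you correctly identify at the end of your sketch---the bound is attained with equality at Kierstead's construction for almost every triple, so the asymptotic SDP bound has no slack and the stability-to-exact conversion is extremely delicate---is precisely what the paper's argument is designed to circumvent. The paper never proves an asymptotic ``$\geq 2/3 - o(1)$'' bound directly and never does stability or a small-$n$ case check. Instead it argues by contradiction from a hypothetical \emph{exact} counterexample $\hat G$ on $k\geq 3$ vertices. The two ingredients are: (a) a short combinatorial lemma (Lemma~\ref{LEM:HasRainbow}) that any counterexample must contain a rainbow vertex $v$ with $|A_v|=3$; and (b) a blow-up $G_n$ of $\hat G$, obtained by replacing each vertex $u$ by a class $V_u$ of $n$ vertices with internal edges coloured randomly from $A_u$. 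The rainbow vertex forces the density of a specific $6$-vertex graph $X$ in $G_n$ to be bounded away from zero (Corollary~\ref{COR:HasRainbow}), while the counterexample assumption implies strong constraints on $G_n$ through the ``good set'' mechanism (Lemma~\ref{LEM:GoodSet} and its flag-algebra encoding in Lemma~\ref{LEM:GkFlagConstraint}), and the SDP on $6$-vertex graphs then certifies $d_X(G_n)=o(1)$, a contradiction. Because the target of the SDP is zero density of a forbidden configuration in $G_n$ rather than a sharp numerical lower bound on domination, the tightness of Kierstead's construction is no longer an obstacle: a valid SDP certificate yields an exact finite-$n$ statement immediately, with no stability analysis and no residual small-$n$ cases.

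On your sketch itself: the plan to show $\mathbb{E}_T[f_{c(T)}(T)]\geq 2/3$ for some rule $T\mapsto c(T)$ and then classify near-extremal colourings has two genuine gaps. First, you have not specified how the stability step would actually be carried out; this is non-trivial because $\varepsilon n^2$ adversarial recolourings can destroy the domination of any particular triple chosen ``one per quasi-class,'' and many vertices in Kierstead's construction are dominated by only one member of the triple, so there is no obvious redundancy to exploit. Second, the ``small-$n$ verification'' you mention is not a well-scoped finite check unless the stability step is quantitatively very strong, because for every $n$ there exist near-Kierstead colourings that must be handled by the exactness argument rather than by direct enumeration. These are not unfillable holes in principle, but they are exactly the parts you flag as unresolved, and the paper's contradiction-via-blow-up argument renders them unnecessary.
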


Our proof builds on the work of Kr\'al', Liu, Sereni,
Whalen, and Yilma \cite{Kral_4Set}. The main difference is that by
using an idea of Hladky, Kr\'al', and Norine \cite{Kral_Digraphs} we
have additional constraints to encode the $2/3$ condition when
applying the semidefinite flag algebra method (see Lemma
\ref{LEM:GkFlagConstraint}). Another difference is that we conduct our computations on $6$
vertex graphs, whereas in \cite{Kral_4Set} they only look at $5$
vertex graphs.

\section{Proof of Theorem \ref{THM:MainResult}}\label{SEC:ThmProof}

For the remainder of this paper we will let $\hat{G}$ be a fixed counterexample to Theorem
\ref{THM:MainResult} with $|V(\hat{G})|=k$. So $\hat{G}$ is a $3$-coloured complete graph on
$k\geq 3$ vertices such that every set of $3$ vertices strongly
$c$-dominates strictly less than $2k/3$ vertices for each colour
$c$. We will show that $\hat{G}$ cannot exist by proving that it would have to
satisfy two contradicting properties.

Given a $3$-coloured complete graph and a vertex $v$, let $A_v$ denote the set of colours of the edges incident to $v$.

The following lemma is implicitly given in
the paper by Kr\'al', Liu, Sereni, Whalen, and Yilma
\cite{Kral_4Set}.
\begin{lemma}[Kr\'al', Liu, Sereni, Whalen, and Yilma \cite{Kral_4Set}]\label{LEM:HasRainbow}
Our counterexample $\hat{G}$ must contain a vertex $v\in V(\hat{G})$ with $|A_v|
= 3$.
\end{lemma}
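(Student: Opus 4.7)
The plan is to argue by contradiction: suppose every $v\in V(\hat G)$ satisfies $|A_v|\le 2$, and produce a set of three vertices that strongly $c$-dominates at least $2k/3$ of $V(\hat G)$ for some colour $c$, contradicting the choice of $\hat G$. For each vertex $v$ I would pick a missing colour $m(v)\in\{0,1,2\}\setminus A_v$ (the choice is free whenever $|A_v|\le 1$, which we will exploit below) and set $V_i:=m^{-1}(i)$, giving a partition $V(\hat G)=V_0\sqcup V_1\sqcup V_2$. The key observation is that for $u\in V_i$ and $v\in V_j$ with $i\ne j$ the edge $uv$ must carry the unique colour $\ell\in\{0,1,2\}\setminus\{i,j\}$: its colour cannot be $i$ (since $i\notin A_u$) nor $j$ (since $j\notin A_v$).

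Next comes the generic case in which, after relabelling, $V_0$ and $V_1$ are both non-empty and are the two largest classes. Because $|V_2|$ is smallest, $|V_0\cup V_1|\ge 2k/3$. Picking any $v_0\in V_0$, $v_1\in V_1$ and any third vertex $v_3$, the observation tells us that every vertex of $V_1$ (including $v_1$ itself) is joined to $v_0$ by a colour-$2$ edge and every vertex of $V_0$ (including $v_0$ itself) is joined to $v_1$ by a colour-$2$ edge. Hence $\{v_0,v_1,v_3\}$ strongly $2$-dominates $V_0\cup V_1$, a set of size at least $2k/3$, contradicting the choice of $\hat G$.

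The remaining possibility is that for every admissible choice of $m$ only one of $V_0,V_1,V_2$ is non-empty, which forces every vertex to satisfy $|A_v|=2$ with the \emph{same} pair $A_v=\{a,b\}$; equivalently $\hat G$ is a $2$-coloured $K_k$ with colour set $\{a,b\}$ in which every vertex meets both colours. For this sub-case I would apply Theorem~\ref{THM:2Col} with $t=3$: it yields three vertices that $c$-dominate at least $7k/8$ of $V(\hat G)$ for some $c\in\{a,b\}$, and passing from $c$-domination to strong $c$-domination costs at most the three vertices of the dominating set itself, so they strongly $c$-dominate at least $7k/8-3$ vertices, which exceeds $2k/3$ for all $k\ge 15$. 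The finitely many small cases $3\le k\le 14$ can be handled directly, e.g.\ by taking a vertex of maximum $c$-degree together with two of its $c$-neighbours. I expect this $2$-coloured sub-case to be the main technical obstacle: the partition trick yields no leverage there, and bridging the gap between $c$-domination and \emph{strong} $c$-domination requires some extra care beyond a bare application of Theorem~\ref{THM:2Col}.
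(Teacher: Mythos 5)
Your partition argument matches the paper's proof almost exactly: you split $V(\hat G)$ into classes by the missing colour, observe that all edges between two distinct classes carry the unique colour outside both indices, and take a vertex from each of the two largest classes to strongly $c$-dominate $\geq 2k/3$ of the vertices. Your variant of folding the $|A_v|\le 1$ cases into the partition via the free choice of $m(v)$ works (the paper dispatches $|A_v|=1$ separately as trivially dominating everything), and you correctly identify that the only remaining case is when all vertices have the same 2-element $A_v$, i.e.\ $\hat G$ is $2$-coloured.

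However, your handling of the $2$-coloured case has a genuine gap, which you yourself half-acknowledge. The paper resolves it by noting that the \emph{constructive} proof of Theorem~\ref{THM:2Col} in \cite{Erdos_2Colour} actually produces a \emph{strongly} $c$-dominating set of at most $t$ vertices covering $\geq(1-1/2^t)n$ vertices, valid for all $t\ge 2$ and $n\ge 2$, so no separate small-$k$ argument is needed. You instead try to derive strong domination from the black-box statement of Theorem~\ref{THM:2Col} (losing the $\le 3$ vertices of the dominating set), which forces you to treat $3\le k\le 14$ separately, and your proposed fix for small $k$ does not actually suffice: taking a vertex of maximum $c$-degree together with two of its $c$-neighbours strongly $c$-dominates at least $d_c(v_1)+1$ vertices, but the only general lower bound on the maximum degree in a $2$-coloured $K_k$ is $(k-1)/2$ (and this is tight for near-regular colourings), giving $(k+1)/2\ge 2k/3$ only when $k\le 3$. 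You would need a genuinely different argument, e.g.\ the greedy analysis from the constructive proof of Theorem~\ref{THM:2Col}, to close this; as written, the $2$-coloured sub-case is not complete.
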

\begin{proof}
Since $\hat{G}$ is a counterexample it cannot contain a vertex $v$ with
$|A_v|=1$ otherwise any set of $3$ vertices containing $v$ will
strongly $c$-dominate all the vertices, for $c\in A_v$. So it is
enough to show that if $|A_v|=2$ for every vertex then $\hat{G}$ is not a
counterexample.

Let the set of colours be
$\{1,2,3\}$. If every vertex has $|A_v|=2$ we can partition the
vertices into three disjoint classes $V_1,V_2,V_3$ where $v\in V_i$
if $i\notin A_v$. Without loss of generality we can assume
$|V_1|\geq |V_2| \geq |V_3|$. Note that the colour of all edges $uv$
with $u\in V_1$ and $v\in V_2$ is $3$ because $A_u\cap A_v =
\{2,3\}\cap\{1,3\} = \{3\}$. Consequently any set of $3$ vertices
containing a vertex from $V_1$ and a vertex from $V_2$ must strongly
$3$-dominate $V_1\cup V_2$ which is at least $2/3$ of the vertices.

To complete the proof we need to consider what happens if we cannot
choose a vertex from $V_1$ and $V_2$. This can only occur
if $V_2=\emptyset$ which implies $V_3=\emptyset$ and $V_1 = V(\hat{G})$,
i.e.\ $\hat{G}$ is $2$-coloured. In this case we can apply the result of
Erd\H os, Faudree, Gy\'arf\'as, and Schelp, Theorem \ref{THM:2Col}.
Although technically the theorem is not stated in terms of strongly
$c$-dominating a set, its proof given in \cite{Erdos_2Colour} is
constructive and it can be easily checked that the dominating set it
finds is strongly $c$-dominating (for $t\geq 2$ and $n\geq 2$). \end{proof}

We will show via the semidefinite flag algebra method that $\hat{G}$
cannot contain a vertex $v$ with $|A_v|=3$ contradicting Lemma
\ref{LEM:HasRainbow}. The flag algebra method is primarily used to
study the limit of densities in sequences of graphs. As such we will
not apply it directly to $\hat{G}$ but to a sequence of graphs
$(G_n)_{n\in\mathbb{N}}$ where $G_n$ is constructed from $\hat{G}$ as
follows. $G_n$ is a $3$-coloured complete graph on $nk$ vertices
where each vertex $u\in V(\hat{G})$ has been replaced by a class of
$n$ vertices $V_u$. The edges of $G_n$ are coloured as follows:
edges between two classes $V_u$ and $V_v$ have the same colour as
$uv$ in $\hat{G}$, while edges within a class, $V_u$ say, are
coloured independently and uniformly at random with the colours from
$A_u$.

We would like to claim that $G_n$ is also a counterexample, but this may not be true. However, there
exist particular types of $3$ vertex sets which with high
probability strongly $c$-dominate at most $2/3+o(1)$ of the vertices
in $G_n$ for some colour $c$. (Unless otherwise stated $o(1)$ will denote a quantity that tends to zero as $n\to \infty$.)

We note that Chernoff's bound implies that for all $u\in V(\hat{G})$, $c\in A_u$ and $v\in V_u\subset V(G_n)$ we have \[
|\{w\in V_u: vw\textrm{ is coloured }c\}|=\frac{n}{|A_u|}+o(n),\] with probability $1-o(1)$.

Given a $3$-coloured complete graph and a colour $c$ we define a
\emph{good set for $c$} to be a set of $3$ vertices $\{x,y,z\}$ such
that either
\begin{enumerate}
\item[(i)] at least two of the edges $xy, xz, yz$ are coloured $c$, or
\item[(ii)] one of the edges, $xy$ say, is coloured $c$ and the remaining
vertex $z$ satisfies $|A_z\cup\{c\}|=3$.
\end{enumerate}
(Although this definition does not appear particularly natural, it
has the advantage of being easily encoded by the semidefinite flag
algebra method.)

\begin{lemma}\label{LEM:GoodSet}
Any good set for $c$ in $G_n$
strongly $c$-dominates at most $2/3+o(1)$ of the vertices with
probability $1-o(1)$.
\end{lemma}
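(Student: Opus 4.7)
Fix a 3-set $S = \{x, y, z\} \subseteq V(G_n)$ and let $u_x, u_y, u_z$ denote the classes containing $x, y, z$; set $T = \{u_x, u_y, u_z\}$. The plan is to show that, whenever $S$ is a good set for $c$, the count $|N(S)|$ of vertices strongly $c$-dominated by $S$ satisfies $|N(S)| \leq (2/3 + o(1))nk$ with probability $1 - o(1)$. Since there are only $O((nk)^3)$ choices of $S$ and Chernoff's inequality will deliver an exponentially small failure probability for each, a union bound then yields the uniform conclusion.

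The key structural observation is that for $u \notin T$, every edge from $S$ to $V_u$ has colour determined by $\hat{G}$, so either all of $V_u$ lies in $N(S)$ (iff $u$ is strongly $c$-dominated by $T$ in $\hat{G}$) or none of it does. For $u \in T$, the $V_u$-contribution splits into a deterministic part (from edges in $S \setminus V_u$) and a random part from internal edges at $S \cap V_u$; by Chernoff's inequality, the random contribution is $n(1 - (1 - 1/|A_u|)^{|S \cap V_u|}) + o(n)$ when $c \in A_u$, and $0$ otherwise.

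I would then case-split on $|T|$ together with condition (i) or (ii) of the good-set definition. The cleanest subcase is $|T| = 3$ with condition (i): at least two $c$-edges among $xy, xz, yz$ force $\{u_x, u_y, u_z\}$ to contain two $c$-edges in $\hat{G}$, putting all three vertices in the $\hat{G}$-domination set $D$; hence $|N(S)| = |D| \cdot n < 2nk/3$ directly. Under $|T| = 3$, condition (ii), the $c$-edge is WLOG $xy$, giving $u_x u_y = c$ and $u_x, u_y \in D$; when $u_z \in D$ the same bound applies, and otherwise ($u_x u_z, u_y u_z \neq c$) only $z$ contributes in $V_{u_z}$, the random contribution there being $n/|A_{u_z}| + o(n)$ when $c \in A_{u_z}$. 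The hypothesis $|A_z \cup \{c\}| = 3$ together with $A_z \subseteq A_{u_z}$ forces both non-$c$ colours into $A_{u_z}$, pinning $|A_{u_z}|$ to $2$ or $3$: in the $|A_{u_z}| = 2$ case $c \notin A_{u_z}$ and the random contribution is $0$, while in the $|A_{u_z}| = 3$ case it is $n/3 + o(n)$, and then $|D| < 2k/3$ combined with $|D| \in \mathbb{Z}$ yields $|D| + 1/3 \leq 2k/3$ by a short check over $k \bmod 3$. The cases $|T| = 2$ and $|T| = 1$ proceed in the same spirit, leveraging $|A_u| \geq 2$ for all $u \in V(\hat{G})$ (which is essentially the $|A_v|=1$ step in the proof of Lemma~\ref{LEM:HasRainbow}) and, for $|T|=1$, refining the bound on the $c$-degree $d_u$ to $d_u + 1 < 2k/3$ by extending $\{u\}$ to a 3-set using two $c$-neighbours of $u$ in $\hat{G}$.

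The main obstacle is that the arithmetic slack in $|D| < 2k/3$ can be as small as $1/3$ of a class (precisely when $k \equiv 2 \pmod 3$), while the random contribution from a single set-member in a class with $|A_u| = 3$ is exactly $n/3$. The good-set definition is tuned so that these quantities match up: the requirement $|A_z \cup \{c\}| = 3$ in condition (ii) forces the non-$c$ colours into $A_{u_z}$, capping the random internal contribution at $n/3$ (and yielding $0$ when $|A_{u_z}| = 2$), and analogous structural constraints drive the $|T| \leq 2$ subcases, so that the bound $2/3 + o(1)$ emerges with the slack absorbed exactly.
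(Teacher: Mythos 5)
Your proposal follows essentially the same route as the paper: decompose the strongly $c$-dominated set into a deterministic part (classes dominated via class-level edges of $\hat{G}$) and a random internal part, appeal to Chernoff for concentration, then case-split on the number of distinct classes $|T|$ and on whether $S$ is a good set of type (i) or (ii). Your $|T|=3$ analysis is correct and matches the paper's, including the key use of $A_z\subseteq A_{u_z}$ and $|A_z\cup\{c\}|=3$ to pin the random contribution in $V_{u_z}$ to at most $n/3+o(n)$ (or zero), and the integrality observation $|D|<2k/3\Rightarrow |D|\le 2k/3-1/3$.

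The one place your sketch is genuinely incomplete is the $|T|=2$ case with the $c$-edge $xy$ internal to a shared class $V_{u_1}$ (type (ii), $z$ alone in $V_{u_2}$). There the random contribution in $V_{u_1}$ from the two set-members $x,y$ is $n\bigl(1-(1-1/|A_{u_1}|)^2\bigr)+o(n)$, which can be as large as $3n/4$; combined with the up-to-$n/3$ contribution in $V_{u_2}$ this exceeds one full class, so neither ``$|A_u|\ge 2$'' nor the $1/3$-of-a-class slack in $|D|<2k/3$ absorbs it. What rescues this case is exactly the trick you invoke for $|T|=1$: since the internal $c$-edge forces $c\in A_{u_1}$, one can extend $\{u_1,u_2\}$ to a $3$-set $T$ in $\hat{G}$ containing a $c$-neighbour of $u_1$, so that $u_1\in D_T$ and the entire class $V_{u_1}$ is already counted in $n|D_T|\le n(2k/3-1/3)$, leaving only the $n/3+o(n)$ from $V_{u_2}$. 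The paper handles $|T|=1$ and $|T|=2$ uniformly this way (extending $S'$ to a $3$-set $T$ with a $c$-neighbour of the crowded class $m$), and you should apply the same device to $|T|=2$ rather than relying on ``$|A_u|\ge 2$''. Relatedly, your $|T|=1$ argument asks for \emph{two} $c$-neighbours of $u$, which need not exist; one $c$-neighbour suffices (it already puts $u$ itself into $D_T$ and hence the whole class $V_u$ into $\bigcup_{d\in D_T}V_d$), and this also covers the degenerate subcases $d_c(u)\in\{1\}$ without a separate check.
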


\begin{proof}
For $u\in V(\hat{G})$ recall that $V_u$ is the corresponding class
of $n$ vertices in $G_n$. Given $S = \{x,y,z\}$ a good set for $c$
in $G_n$, we will consider its ``preimage'' in $\hat{G}$ which we
denote $S'$, i.e.\ $S'\subseteq V(\hat{G})$ is minimal such that
$S\subseteq \bigcup_{u\in S'} V_u$. Let the strongly $c$-dominated
sets be $D_S$ and $D_{S'}$ for $S$ in $G_n$ and $S'$ in $\hat{G}$
respectively. For $v\in S$ there exists some $u\in V(\hat{G})$ such
that $v\in V_u$, let us define $W_v$ to be the vertices that lie
within $V_u$ that are strongly $c$-dominated by $v$.

We first consider the case where $|S'|=3$ (which occurs when no
two members of $S$ lie in the same vertex class). Since $\hat{G}$ is a
counterexample we have $|D_{S'}|<2k/3$ or equivalently $|D_{S'}|\leq
2k/3-1/3$. It is easy to check that
\[
D_S = \left(\bigcup_{d\in D_{S'}} V_d\right) \cup
\left(\bigcup_{v\in S} W_v\right).
\]
We can split the problem into two cases depending on which type of
good set $S$ is. If $S$ is of type (i) then it is
easy to see that $W_v\subseteq \bigcup_{d\in D_{S'}} V_d$ for every
$v\in S$. Consequently $D_S = \bigcup_{d\in D_{S'}} V_d$ implying
$|D_S|=n|D_{S'}| \leq 2nk/3-n/3$, hence $D_S$ contains at most $2/3$
of the vertices in $G_n$ as required.

If $S$ is of type (ii), with $xy$ coloured $c$, then
$W_x, W_y\subseteq \bigcup_{d\in D_{S'}} V_d$. If $c\in A_z$ then
$|A_z|=3$ and Chernoff's bound implies that $|W_z|\leq n/3+o(n)$ holds with probability $1-o(1)$. Note
that this also holds when $c\notin A_z$ as $W_z=\emptyset$. So $D_S =
W_z\cup\bigcup_{d\in D_{S'}} V_d$ and
\[
|D_S|\leq |W_z|+n|D_{S'}|\leq n/3+o(n)+n(2k/3-1/3) = (2/3+o(1))nk\]
 with probability $1-o(1)$ as claimed.

To complete the proof we need to consider what happens when
$|S'|<3$. This can only occur if in $G_n$ there exists a vertex
class, $V_m$ say (with $m\in V(\hat{G})$), that contains two or three
members of $S$. By the definition of a good set at least two of the
vertices in $S$ are incident with an edge of colour $c$, so at least
one such vertex is present in $V_m$ implying $c\in A_m$. This shows
that we can always choose a set $T$ of three vertices in $\hat{G}$ that
contains both $S'$ and a vertex $u$ (possibly contained in $S'$) with the property that $um$ is
coloured $c$. The fact that $S'$ is a subset of $T$ gives us
\[
D_S \subseteq \left(\bigcup_{d\in D_T} V_d\right) \cup
\left(\bigcup_{v\in S} W_v\right),
\]
where $D_T$ is the set strongly $c$-dominated by $T$ in $\hat{G}$,
and having $u\in T$ ensures $W_v\subseteq \bigcup_{d\in D_T} V_d$
for every $v\in S\cap V_m$. Consequently we can apply the same
argument we used for $|S'|=3$. We note that (since $\hat{G}$ is a
counterexample) $|D_{T}|\leq 2k/3-1/3$, and that if $S$ is of type
(i) we have $D_S \subseteq \bigcup_{d\in D_T} V_d$
otherwise $S$ is of type (ii) and $D_S \subseteq
W_z\cup\bigcup_{d\in D_T} V_d$. In either case we get the desired
result that $D_S$ contains at most $2/3+o(1)$ of the vertices with
high probability.
\end{proof}

We note that there are other types of $3$ vertex sets that we
could potentially utilize other than the ``good sets'', however our
proof does not require them and so we will not discuss them here.
Also similar results can be proven for sets larger than $3$ which
may be of use in other problems. For a more general treatment we
refer the reader to Kr\'al', Liu, Sereni, Whalen, and Yilma
\cite{Kral_4Set}.

%
%

Given two $3$-coloured complete graphs $F$, $G$ with $|V(F)|\leq
|V(G)|$, we define $d_F(G)$, the \emph{density} of $F$ in $G$, to be
the proportion of sets of size $|V(F)|$ in $G$ that induce a
$3$-coloured complete graph that is identical to $F$ up to a
re-ordering of vertices.

\begin{figure}[tbp]
\begin{center}
\ifdefined\ColourDiagrams
\includegraphics[height=5cm]{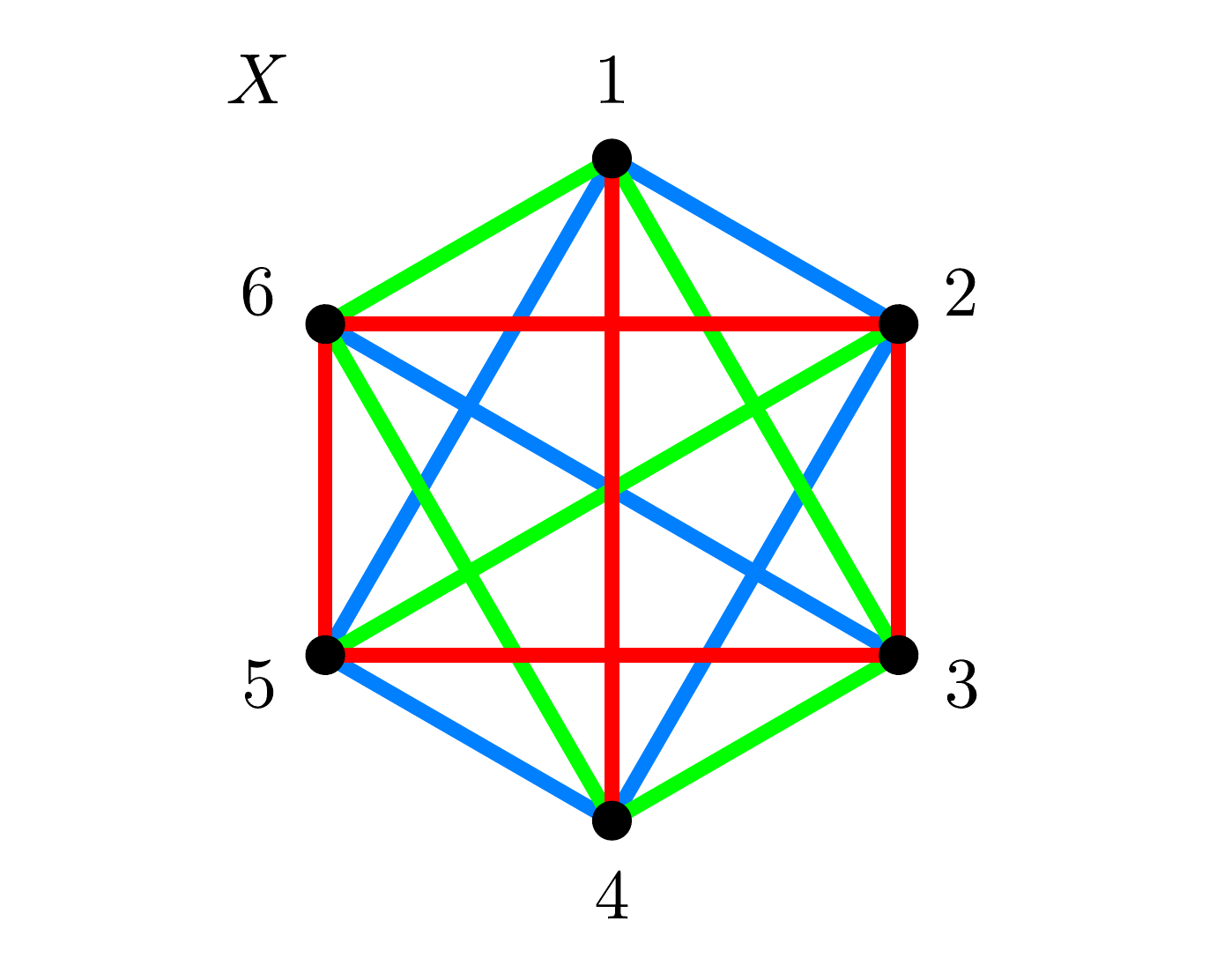}
\else
\includegraphics[height=5cm]{GraphX_grey}
\fi \caption{The $3$-coloured graph $X$.}\label{FIG:GraphX}
\end{center}
\end{figure}

In \cite{Kral_4Set} the authors bound the density in $G_n$ of a
family of graphs in order to contradict Lemma \ref{LEM:HasRainbow}.
In particular they chose their family to consist of all $3$-coloured
complete graphs on $5$ vertices that contain a vertex $v$ with
$|A_v|=3$. We will instead bound the density of a single $6$ vertex
graph $X$, whose coloured edge sets are given by
\[
\{14, 23, 35, 56, 62\}, \{25, 34, 46, 61, 13\}, \{36, 45, 51, 12,
24\},
\]
see Figure \ref{FIG:GraphX}.

Observe that for our counterexample $\hat{G}$, Lemma
\ref{LEM:HasRainbow} implies that there exists a vertex $u$ with
$|A_u|=3$, which in turn implies there exists a class of $n$
vertices $V_u$ in $G_n$ with the edges coloured uniformly at random.
By considering the probability of finding $X$ in $V_u$ we have the
following simple bound for $d_X(G_n)$.

%

\begin{corollary}\label{COR:HasRainbow}
With probability $1-o(1)$,
\[
d_X(G_n) \geq k^{-|V(X)|} 3^{-|E(X)|}+o(1).
\]
\end{corollary}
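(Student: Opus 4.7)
The plan is to use Lemma \ref{LEM:HasRainbow} to obtain a vertex $u\in V(\hat{G})$ with $|A_u|=3$. By the construction of $G_n$, the class $V_u\subseteq V(G_n)$ consists of $n$ vertices whose internal edges are coloured independently and uniformly at random from the three colours in $A_u$, and $|V(G_n)|=nk$. I would lower bound $d_X(G_n)$ by counting only copies of $X$ that lie entirely inside $V_u$, since such subsets are the easiest to control.

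Fix a 6-subset $S\subseteq V_u$. For any bijection from $V(X)$ to $S$, the probability that the $|E(X)|=15$ random edges among $S$ realise precisely the prescribed colouring is exactly $3^{-|E(X)|}$; in particular, the probability that $S$ induces a 3-coloured graph isomorphic to $X$ is at least $3^{-|E(X)|}$ (with equality up to the factor $6!/|\mathrm{Aut}(X)|\geq 1$, which we simply discard). Letting $N_X$ denote the number of 6-subsets of $V_u$ inducing a copy of $X$, linearity of expectation gives $\mathbb{E}[N_X]\geq \binom{n}{6}\, 3^{-|E(X)|}$.

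The next step is to show that $N_X$ is concentrated around its mean. The indicator variables for two 6-subsets $S,S'\subseteq V_u$ are independent whenever $|S\cap S'|\leq 1$, since in that case they depend on disjoint sets of random edges. Counting pairs by the size of their intersection shows that $\mathrm{Var}(N_X)=O(n^{10})$ while $\mathbb{E}[N_X]^2=\Theta(n^{12})$, so Chebyshev's inequality yields $N_X=(1+o(1))\mathbb{E}[N_X]$ with probability $1-o(1)$. Combining this with $d_X(G_n)\geq N_X/\binom{nk}{6}$ and the asymptotic $\binom{n}{6}/\binom{nk}{6}\to k^{-6}=k^{-|V(X)|}$ as $n\to\infty$ produces the claimed bound. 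The only step requiring any thought is the concentration estimate, but it is entirely standard for U-statistic-type quantities (and could alternatively be obtained via McDiarmid's inequality applied to the random edge colours), so it should not be a genuine obstacle.
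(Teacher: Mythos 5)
Your proposal is correct and takes essentially the same approach as the paper: restrict attention to the $n$-vertex class $V_u$ coming from a rainbow vertex $u$ (supplied by Lemma \ref{LEM:HasRainbow}), compute the expected number of copies of $X$ inside $V_u$, and compare against $\binom{nk}{6}$. The paper states this in a single sentence and leaves the concentration step implicit, whereas you spell out the second-moment/Chebyshev argument, which is the only part requiring any real justification.
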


By encoding Lemma \ref{LEM:GoodSet} using flag algebras we will show
that with high probability, $d_X(G_n)= o(1)$,  a contradiction proving that no
counterexample exists.

\subsection{Flag algebras}\label{SEC:FlagAlgebras}
Razborov's semidefinite flag algebra method introduced in \cite{RF}
and \cite{R4} has proven to be an invaluable tool in extremal graph
theory. Many results have been found through its application, see
for example \cite{B11}, \cite{BT}, \cite{BTNew}, \cite{FV},
\cite{Kral_Digraphs}, \cite{Kral_4Set}. We also refer interested
readers to \cite{Baber_Cube} for a minor improvement to the general
method. Our notation and description of the method for $3$-coloured
graphs is largely adapted from the explanation given by Baber and
Talbot in \cite{BT}.

We will say that two $3$-coloured complete graphs are
\emph{isomorphic} if they can be made identical by permutating their
vertices. Let $\mc{H}$ be the family of all $3$-coloured complete
graphs on $l$ vertices, up to isomorphism. If $l$ is sufficiently
small we can explicitly determine $\mc{H}$ (by computer search if
necessary). For $H\in \mc{H}$ and a large $3$-coloured complete
graph $K$, we define $p(H;K)$ to be the probability that a random
set of $l$ vertices from $K$ induces a $3$-coloured complete graph
isomorphic to $H$.

Using this notation and averaging over $l$ vertex sets in $G_n$
(with $l\geq |V(X)|$), we can show
\begin{equation}\label{EQN:DXTrivialDensity}
d_X(G_n) = \sum_{H\in \mc{H}}d_X(H)p(H;G_n),
\end{equation}
and hence $d_X(G_n)\leq \max_{H\in\mc{H}}d_X(H)$. This bound is
unsurprisingly extremely poor. We will rectify this by creating a
series of inequalities from Lemma \ref{LEM:GoodSet} that we can use
to improve (\ref{EQN:DXTrivialDensity}). To do this we first need to
consider how small pairs of $3$-coloured complete graphs can
intersect. We will use Razborov's method and his notion of flags and
types to formally do this.

A \emph{flag}, $F=(K',\theta)$, is a $3$-coloured complete graph
$K'$ together with an injective map $\theta: \{1,\ldots,s\}\to
V(K')$. If $\theta$ is bijective (and so $|V(K')|=s$) we call the
flag a \emph{type}. For ease of notation given a flag
$F=(K',\theta)$ we define its order $|F|$ to be $|V(K')|$. Given a
type $\sigma$ we call a flag $F=(K',\theta)$ a $\sigma$-\emph{flag}
if the induced labelled $3$-coloured subgraph of $K'$ given by
$\theta$ is $\sigma$.

For a type $\sigma$ and an integer $m\geq |\sigma|$, let $\msig$ be
the set of all $\sigma$-flags of order $m$, up to isomorphism. For a
non-negative integer $s$ and $3$-coloured complete graph $K$, let
$\Theta(s,K)$ be the set of all injective functions from
$\{1,\ldots,s\}$ to $V(K)$. Given $F\in\msig$ and
$\theta\in\Theta(|\sigma|,K)$ we define $p(F,\theta;K)$ to be the
probability that an $m$-set $V'$ chosen uniformly at random from
$V(K)$ subject to $\tr{im}(\theta)\subseteq V'$, induces a
$\sigma$-flag $(K[V'],\theta)$ that is isomorphic to $F$.

If $F_1\in \F_{m_1}^\sigma$, $F_2\in \F_{m_2}^\sigma$, and
$\theta\in \Theta(|\sigma|,K)$ then $p(F_1,\theta;K)p(F_2,\theta;K)$
is the probability that two sets $V_1,V_2\subseteq V(K)$ with
$|V_1|=m_1$, $|V_2|=m_2$, chosen independently at random subject to
$\tr{im}(\theta)\subseteq V_1\cap V_2$, induce $\sigma$-flags
$(K[V_1],\theta)$, $(K[V_2],\theta)$ that are isomorphic to $F_1,
F_2$ respectively. We define the related probability,
$p(F_1,F_2,\theta;K)$, to be the probability that two sets
$V_1,V_2\subseteq V(K)$ with $|V_1|=m_1$, $|V_2|=m_2$, chosen
independently at random subject to $\tr{im}(\theta)=V_1\cap V_2$,
induce $\sigma$-flags $(K[V_1],\theta)$, $(K[V_2],\theta)$ that are
isomorphic to $F_1, F_2$ respectively. Note that the difference in
definitions between $p(F_1,\theta;K)p(F_2,\theta;K)$ and
$p(F_1,F_2,\theta;K)$ is that of choosing the two sets with or
without replacement. It is easy to show that
$p(F_1,\theta;K)p(F_2,\theta;K)=p(F_1,F_2,\theta;K)+o(1)$ where the
$o(1)$ term vanishes as $|V(K)|$ tends to infinity.

Taking the expectation over a uniformly random choice of $\theta\in
\Theta(|\sigma|,K)$ gives
\begin{equation}\nonumber
\mathbf{E}_{\theta\in\Theta(|\sigma|,K)}\left[p(F_1,\theta;K)p(F_2,\theta;K)\right]
=
\mathbf{E}_{\theta\in\Theta(|\sigma|,K)}\left[p(F_1,F_2,\theta;K)\right]+o(1).
\end{equation}
Furthermore the expectation on the right hand side can be rewritten
in terms of $p(H;K)$ by averaging over $l$-vertex subgraphs of $K$,
provided $m_1+m_2-|\sigma|\leq l$ (i.e.\ $F_1$ and $F_2$
intersecting on $\sigma$ fits inside an $l$ vertex graph). Hence
\begin{multline}\label{EQN:FlagProduct}
\mathbf{E}_{\theta\in\Theta(|\sigma|,K)}\left[p(F_1,\theta;K)p(F_2,\theta;K)\right]
= \\
\sum_{H\in\mc{H}}\mathbf{E}_{\theta\in\Theta(|\sigma|,H)}\left[p(F_1,F_2,\theta;H)\right]p(H;K)+o(1).
\end{multline}
Observe that the right hand side of (\ref{EQN:FlagProduct}) is a
linear combination of $p(H;K)$ terms whose coefficients can be
explicitly calculated using just $\mc{H}$, this will prove useful as
(\ref{EQN:DXTrivialDensity}) is of a similar form.

Given $\F_m^\sigma$ with $2m-|\sigma|\leq l$ and a positive
semidefinite matrix $Q=(q_{ab})$ of dimension $|\mc{F}^\sigma_m|$,
let $\mathbf{p}_\theta=(p(F,\theta;K):F\in \mc{F}^\sigma_m)$ for
$\theta\in\Theta(|\sigma|,K)$. Using (\ref{EQN:FlagProduct}) and the
linearity of expectation we have
\begin{align}\label{EQN:SingleQ}
0\leq
\mathbf{E}_{\theta\in\Theta(|\sigma|,K)}[\mathbf{p}_\theta^TQ\mathbf{p}_\theta]
&= \sum_{H\in\mc{H}} a_H(\sigma,m,Q) p(H;K)+o(1)
\end{align}
where
\[
a_H(\sigma,m,Q) = \sum_{F_a,F_b\in
\mc{F}^\sigma_m}q_{ab}\mathbf{E}_{\theta\in
\Theta(|\sigma|,H)}[p(F_a,F_b,\theta;H)].
\]
Note that $a_H(\sigma,m,Q)$ is independent of $K$ and can be
explicitly calculated. Combining (\ref{EQN:SingleQ}) when $K=G_n$
with (\ref{EQN:DXTrivialDensity}) gives
\begin{align*}
d_X(G_n)&\leq\sum_{H\in\mc{H}}(d_X(H)+a_H(\sigma,m,Q))
p(H;G_n)+o(1)\\
&\leq \max_{H\in\mc{H}}(d_X(H)+a_H(\sigma,m,Q))+o(1).
\end{align*}
Since some of the $a_H(\sigma,m,Q)$ values may be negative (for a
careful choice of $Q$) this may be a better bound (asymptotically)
for $d_X(G_n)$. To help us further reduce the bound we can of course
create multiple inequalities of the form given by
(\ref{EQN:SingleQ}) by choosing different types $\sigma_i$, orders
of flags $m_i$, and positive semidefinite matrices $Q_i$. Let
$\alpha_H = \sum_i a_H(\sigma_i,m_i,Q_i)$ and hence we can say
$d_X(G_n)\leq \max_{H\in\mc{H}}(d_X(H)+\alpha_H)+o(1)$. Finding the
optimal choice of matrices $Q_i$ which lowers the bound as much as
possible is a convex optimization problem, in particular a
semidefinite programming problem. As such we can use freely
available software such as CSDP \cite{B} to find the $Q_i$.

So far the bound on $d_X(G_n)$ is valid for any $3$-coloured
complete graph; we have not yet made any use of the fact that $G_n$
comes from our counterexample $\hat{G}$. Kr\'al', Liu, Sereni,
Whalen, and Yilma remedy this, see Lemma 3.3 in \cite{Kral_4Set}, by
constructing a small set of constraints that $G_n$ must satisfy but
a general $3$-coloured complete graph may not. By using an idea of
Hladky, Kr\'al', and Norine \cite{Kral_Digraphs} we can
significantly increase the number of such constraints.

We say that a $\sigma$-flag $F$ is \emph{$c$-good} if the colouring of $F$ and the size of $\sigma$ imply that $\sigma$ is a good set for $c$ in $F$.

\begin{lemma}\label{LEM:GkFlagConstraint}
Given a colour $c$ and a $c$-good $\sigma$-flag $F$, the following holds with probability $1-o(1)$.
\[
\mathbf{E}_{\theta\in\Theta(|\sigma|,G_n)}\left[p(F,\theta;G_n)\left(\frac{2}{3}p(\sigma,\theta;G_n)
-\sum_{F'\in\mc{D}}p(F',\theta;G_n)\right)\right]+o(1) \geq 0,
\]
where $\mc{D}\subseteq\mc{F}^\sigma_{|\sigma|+1}$ is the set of all
$\sigma$-flags on $|\sigma|+1$ vertices where the vertex not in
$\sigma$ is $c$-dominated by the type.
\end{lemma}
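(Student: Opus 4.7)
The plan is to reduce the inequality, term by term in the expectation over $\theta$, to the conclusion of Lemma~\ref{LEM:GoodSet} applied to the good set $\mathrm{im}(\theta) \subset V(G_n)$. First I would strip away the invalid $\theta$: if $\theta \in \Theta(|\sigma|,G_n)$ does not realise $\sigma$ as its labelled induced subgraph, then $p(F,\theta;G_n)=0$ (and also $p(\sigma,\theta;G_n)=0$ and $p(F',\theta;G_n)=0$ for every $F'\in \mc{D}$), so such $\theta$ contribute $0$ to the expectation. For a valid $\sigma$-embedding $\theta$ we have $p(\sigma,\theta;G_n)=1$, and the integrand collapses to $p(F,\theta;G_n)\bigl(\tfrac{2}{3} - \sum_{F'\in\mc{D}} p(F',\theta;G_n)\bigr)$.

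Next I would interpret the sum. Each $F'\in\mc{D}$ is an isomorphism class of extending $\sigma$ by a single vertex $c$-dominated by the type, so summing over $\mc{D}$ gives the probability that a uniformly random vertex $v\in V(G_n)\setminus\mathrm{im}(\theta)$ is $c$-dominated by $\mathrm{im}(\theta)$. Since such $v$ lie outside $\mathrm{im}(\theta)$, being $c$-dominated is equivalent to being strongly $c$-dominated, so $\sum_{F'\in\mc{D}} p(F',\theta;G_n) \leq |D_\theta|/(|V(G_n)|-|\sigma|)$, where $D_\theta$ is the set strongly $c$-dominated by $\mathrm{im}(\theta)$ in $G_n$.

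Now I would show that whenever $p(F,\theta;G_n)>0$ the set $\mathrm{im}(\theta)$ is a good set for $c$ in $G_n$. The $c$-good property of $F$ says $\sigma$ satisfies condition (i) or (ii) inside $F$: condition (i) depends only on edges within $\sigma$ and so transfers verbatim to $G_n$; condition (ii) requires $|A_z\cup\{c\}|=3$ for the special vertex $z\in\sigma$, and since the true $A_z$ in $G_n$ contains the $A_z$ computed in any embedded copy of $F$, the condition again transfers. Hence Lemma~\ref{LEM:GoodSet}, strengthened to hold simultaneously for all good sets in $G_n$, gives $|D_\theta|\leq (\tfrac{2}{3}+o(1))|V(G_n)|$ with probability $1-o(1)$; this uniform upgrade follows by a union bound over the at most $O(|V(G_n)|^3)$ candidate good sets, since the Chernoff-type failure probability in the original proof of Lemma~\ref{LEM:GoodSet} is exponentially small in $n$. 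Substituting back, $\sum_{F'\in\mc{D}} p(F',\theta;G_n) \leq \tfrac{2}{3}+o(1)$, so the integrand is at least $-o(1)\cdot p(F,\theta;G_n)\geq -o(1)$, and averaging over $\theta$ then yields the displayed inequality with the uniform $o(1)$ absorbed into the additive $o(1)$ on the right. The only step that is not a routine unpacking of definitions is the uniform version of Lemma~\ref{LEM:GoodSet}, and that is essentially automatic from its Chernoff-based proof.
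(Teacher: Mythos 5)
Your proposal is correct and follows essentially the same route as the paper: split on whether $p(F,\theta;G_n)$ vanishes, observe that a positive value forces $\mathrm{im}(\theta)$ to induce $\sigma$ and to be a good set for $c$ in $G_n$, interpret $\sum_{F'\in\mc{D}}p(F',\theta;G_n)$ as the density of vertices strongly $c$-dominated by $\mathrm{im}(\theta)$, and invoke Lemma~\ref{LEM:GoodSet} before averaging over $\theta$. The two points you spell out that the paper leaves implicit — that the $c$-good property of $F$ transfers to $G_n$ because the ambient $A_z$ can only grow and cannot exceed three colours, and that the Chernoff-based estimate underlying Lemma~\ref{LEM:GoodSet} must be made uniform over all $O(n^3)$ good sets via a union bound (the paper's earlier ``for all $u$, $c$, $v$'' Chernoff statement already packages this) — are genuine clarifications rather than a different argument.
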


We note that when $F = \sigma$ Lemma \ref{LEM:GkFlagConstraint} is
equivalent to Lemma 3.3 in \cite{Kral_4Set}.

\begin{proof}
For a fixed $\theta\in\Theta(|\sigma|,G_n)$ if $p(F,\theta;G_n)=0$
then trivially we get
\[
p(F,\theta;G_n)\left(\frac{2}{3}p(\sigma,\theta;G_n)
-\sum_{F'\in\mc{D}}p(F',\theta;G_n)\right) \geq 0.
\]
If $p(F,\theta;G_n)>0$ then there exists a copy of $F$ in $G_n$ and so the
image of $\theta$ is $\sigma$ (or equivalently
$p(\sigma,\theta;G_n)=1$) and $\sigma$ must be a good set for $c$. By
Lemma \ref{LEM:GoodSet} we know that with probability $1-o(1)$,
\[
\frac{2}{3}+o(1)\geq \sum_{F'\in\mc{D}}p(F',\theta;G_n),
\]
which implies
\[
p(F,\theta;G_n)\left(\frac{2}{3}p(\sigma,\theta;G_n)
-\sum_{F'\in\mc{D}}p(F',\theta;G_n)\right)+o(1) \geq 0.
\]
Taking the expectation completes the proof.
\end{proof}

Given a $c$-good flag $F$,
equation (\ref{EQN:FlagProduct}) tells us that provided $|F|+1\leq
l$ we can express the inequality given in Lemma
\ref{LEM:GkFlagConstraint} as
\begin{align}\label{EQN:Gk}
\sum_{H\in\mc{H}}b_H(c,F)p(H;G_n)+o(1)\geq 0,
\end{align}
where $b_H(c,F)$ can be explicitly calculated from $c,F$, and $H$.
Equation (\ref{EQN:Gk}) is of the same form as (\ref{EQN:SingleQ})
and as such we can use it in a similar way to improve the bound on
$d_X(G_n)$. Moreover observe that we can multiply (\ref{EQN:Gk}) by
any non-negative real value without changing its form.

Let $\mc{C}$ be a set of pairs of colours $c$ and $c$-good flags $F$
satisfying $|F|+1\leq l$. For $(c,F)\in\mc{C}$ let $\mu(c,F)\geq 0$
be a real number (whose value we will choose later to help us
improve the bound on $d_X(G_n)$). To ease notation we define
$\beta_H = \sum_{(c,F)\in\mc{C}} \mu(c,F)b_H(c,F)$. It is easy to
check $\sum_{H\in\mc{H}} \beta_H p(H;G_n)+o(1)\geq 0$, thus
combining it with (\ref{EQN:DXTrivialDensity}) and terms such as
(\ref{EQN:SingleQ}) gives
\begin{align*}
d_X(G_n)
&\leq\sum_{H\in\mc{H}}(d_X(H)+\alpha_H+\beta_H)p(H;G_n)+o(1)\\
&\leq\max_{H\in\mc{H}}(d_X(H)+\alpha_H+\beta_H)+o(1)
\end{align*}
Finding an optimal set of non-negative coefficients $\mu(c,F)$ and
semidefinite matrices $Q_i$ can still be posed as a semidefinite
programming problem.

We complete the proof of Theorem \ref{THM:MainResult} with the
following lemma which contradicts Corollary \ref{COR:HasRainbow}.

\begin{lemma}\label{LEM:DXIsZero}
With probability $1-o(1)$ we have $d_X(G_n)=o(1)$.
\end{lemma}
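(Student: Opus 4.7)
The plan is to invoke the full flag algebra machinery developed in the previous pages and reduce the claim to a finite semidefinite programming computation. Concretely, I would choose $l = 6$ (matching the authors' stated choice to work on $6$-vertex graphs, and forced by $|V(X)| \leq l$), enumerate $\mathcal{H}$, the set of all $3$-coloured complete graphs on $6$ vertices up to isomorphism, and aim to show that there exist a finite collection of types $\sigma_i$ with flag orders $m_i$ satisfying $2m_i - |\sigma_i| \leq 6$, positive semidefinite matrices $Q_i$ indexed by $\mathcal{F}^{\sigma_i}_{m_i}$, and non-negative coefficients $\mu(c,F)$ attached to pairs $(c,F) \in \mathcal{C}$ of colours and $c$-good $\sigma$-flags with $|F| \leq 5$, such that
\[
d_X(H) + \alpha_H + \beta_H \leq 0 \quad \text{for every } H \in \mathcal{H}.
\]
Given such a witness, the bound $d_X(G_n) \leq \max_{H \in \mathcal{H}}(d_X(H) + \alpha_H + \beta_H) + o(1)$ derived from combining \eqref{EQN:DXTrivialDensity}, \eqref{EQN:SingleQ}, and the constraints of Lemma \ref{LEM:GkFlagConstraint} immediately yields $d_X(G_n) \leq o(1)$, proving the lemma.

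The first concrete step is computational bookkeeping: generate $\mathcal{H}$ and, for each relevant type $\sigma$ (typically those on $1$, $2$ or $3$ vertices), enumerate $\mathcal{F}^\sigma_m$ and precompute the coefficients $a_H(\sigma,m,Q)$ by evaluating $\mathbf{E}_\theta[p(F_a, F_b, \theta; H)]$ for every pair $F_a, F_b$ and every $H$. In parallel I would enumerate all $c$-good flags $F$ of order at most $5$ (for each colour $c$) and, for each such pair $(c,F)$, compute the coefficients $b_H(c,F)$ appearing in \eqref{EQN:Gk} by averaging $p(F,\theta;H)(\tfrac{2}{3}p(\sigma,\theta;H) - \sum_{F' \in \mathcal{D}} p(F',\theta;H))$ over $\theta \in \Theta(|\sigma|, H)$.

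With these coefficients tabulated, the problem of choosing $Q_i$ and $\mu(c,F)$ to minimise $\max_H (d_X(H) + \alpha_H + \beta_H)$ subject to $Q_i \succeq 0$ and $\mu(c,F) \geq 0$ is a semidefinite program of the form described after \eqref{EQN:SingleQ}. I would solve it numerically with CSDP, as suggested in Section \ref{SEC:FlagAlgebras}. Because the target bound is the exact value $0$ (rather than a tight real extremum), I would look for a feasible solution with some slack: if the numerical optimum is comfortably negative, one expects to be able to round $Q_i$ and $\mu(c,F)$ to rational numbers, adjust $Q_i$ by a small positive multiple of the identity to restore exact positive semidefiniteness, and then verify $d_X(H) + \alpha_H + \beta_H \leq 0$ for each of the finitely many $H \in \mathcal{H}$ by exact rational arithmetic.

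The main obstacle is not conceptual but computational and verificational. First, the size of $\mathcal{H}$ and of the flag sets $\mathcal{F}^\sigma_m$ for $3$-coloured graphs on $6$ vertices is substantially larger than in the $5$-vertex setting of \cite{Kral_4Set}, so the SDP is considerably bigger and success depends on the new $c$-good flag constraints from Lemma \ref{LEM:GkFlagConstraint} supplying enough slack for the maximum to drop to $0$. Second, turning the floating-point CSDP output into a rigorous certificate requires careful rationalisation and an exact positive-semidefiniteness check (for instance via a Cholesky-style argument) together with an exact evaluation of $d_X(H) + \alpha_H + \beta_H$ for every $H$. If either the bound fails to reach $0$ or the rounding destroys feasibility, one would need to enlarge $\mathcal{C}$ (e.g.\ by including more $c$-good flags) or, as a last resort, work with $l = 7$.
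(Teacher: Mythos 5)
Your proposal matches the paper's proof essentially exactly: set $l=6$, build the flag-algebra SDP augmented with the $c$-good constraints of Lemma \ref{LEM:GkFlagConstraint}, solve it numerically, and then convert the floating-point output into an exact rational certificate showing $d_X(H)+\alpha_H+\beta_H\leq 0$ for every $H\in\mc{H}$, which the paper does via a data file and a C++ checker. One practical caveat worth flagging: the optimal bound here is exactly $0$ (not ``comfortably negative''), so the ``round and perturb by $\epsilon I$'' strategy has no slack to absorb; the paper instead uses the specific rounding-error-removal technique from Section 2.4.2 of \cite{B11}, and also exploits invariance under colour permutations to shrink $|\mc{H}|$ from $25506$ to $4300$, which is what makes the $l=6$ computation feasible.
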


\begin{proof}
By setting $l=6$ (the order of the graphs $H$) and solving a
semidefinite program we can find coefficients $\mu(c,F)$ and
semidefinite matrices $Q_i$ such that
$\max_{H\in\mc{H}}(d_X(H)+\alpha_H+\beta_H)=0$. The relevant data
needed to check this claim can be found in the data file
\texttt{2-3.txt}. There is too much data to check by hand so we also
provide the C++ program \textsf{DominatingDensityChecker} to check
the data file. Both data file and proof checker may be downloaded from the arXiv \url{http://arxiv.org/e-print/1306.6202v1}.

It is worth noting that in order to get a tight bound we used the
methods described in Section 2.4.2 of \cite{B11} to remove the
rounding errors from the output of the semidefinite program solvers.

We end by mentioning that when $l=6$ the computation has to consider
$25506$ non-isomorphic graphs which form $\mc{H}$, and as a result
solving the semidefinite program is very time consuming. However,
our method for proving the result makes no preferences between the
colours. Consequently it is quite easy to see that if there exists a
solution then there must also exist a solution which is invariant
under the permutations of the colours. So if $H_1,H_2\in \mc{H}$ are
isomorphic after a permutation of their colours then in an
``invariant solution'' $d_X(H_1)+\alpha_{H_1}+\beta_{H_1} =
d_X(H_2)+\alpha_{H_2}+\beta_{H_2}$ must necessarily hold. Therefore
by restricting our search to invariant solutions we need only worry
about those $H$ in $\mc{H'}$ the set of $3$-coloured complete graphs
on $l$ vertices that are non-isomorphic even after a permutation of
colours. For $l=6$, $|\mc{H'}|=4300$ which results in a
significantly easier computation.
\end{proof}

\section{Open problems}

\begin{figure}[tbp]
\begin{center}
\ifdefined\ColourDiagrams
\includegraphics[height=6.5cm]{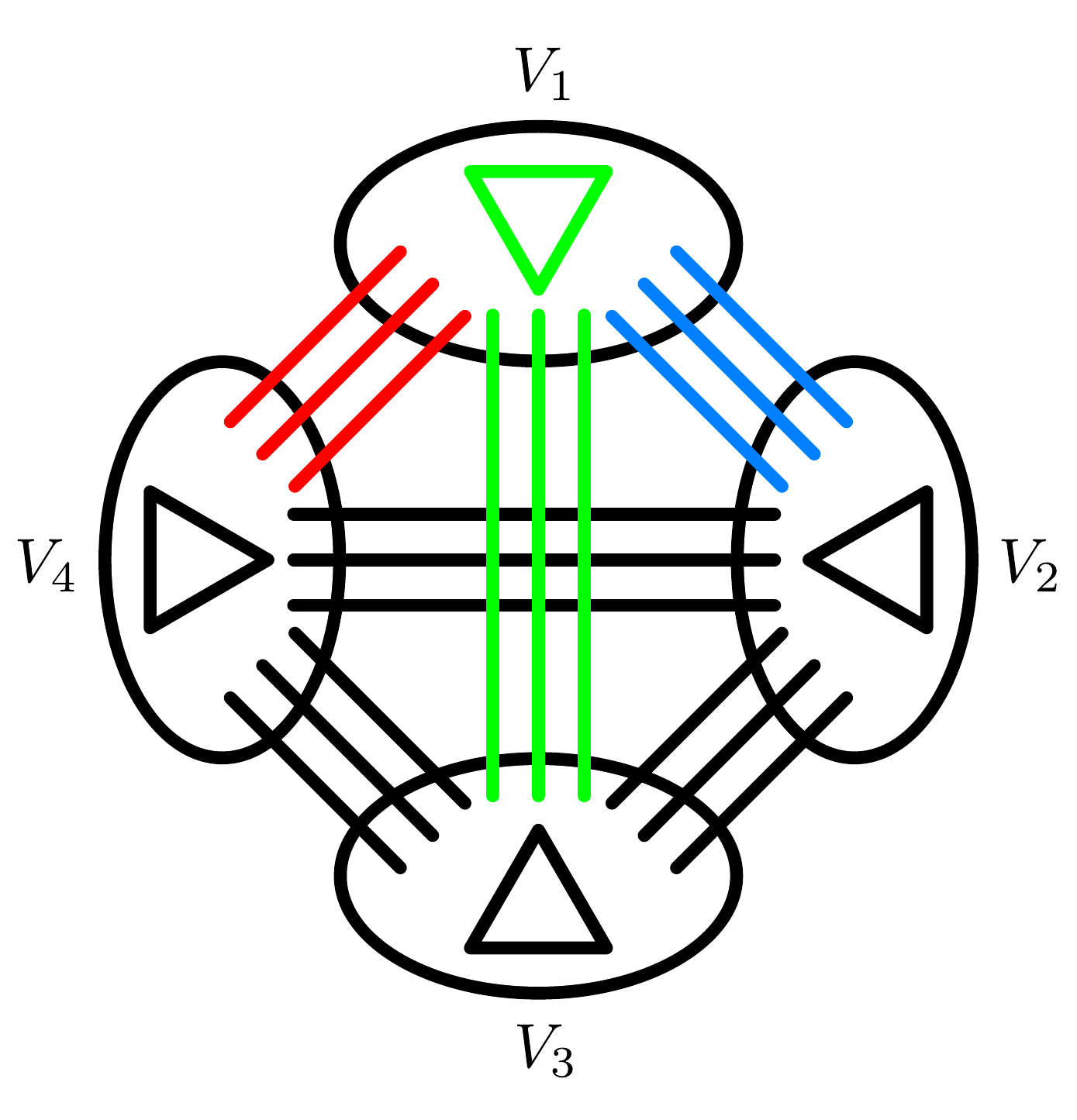}
\caption{A generalisation of Kierstead's construction to $4$
colours. The classes $V_1,V_2,V_3$, and $V_4$ contain $2/5,1/5,1/5$,
and $1/5$ of the vertices respectively.}
\else
\includegraphics[height=6.5cm]{Extremal4_grey}
\caption{A generalisation of Kierstead's construction to $4$
colours. The $4$ colours are represented by solid black, dashed
black, solid grey, and dashed grey lines. The classes $V_1,V_2,V_3$,
and $V_4$ contain $2/5,1/5,1/5$, and $1/5$ of the vertices
respectively.} \fi \label{FIG:Extremal4}
\end{center}
\end{figure}

Erd\H os, Faudree, Gould, Gy\'arf\'as, Rousseau, and Schelp ask in
\cite{Erdos_22Set} whether every $4$-coloured complete graph always
contains a small set of vertices that monochromatically dominate at
least $3/5$ of the vertices. The value of $3/5$ comes from
considering the $4$ colour equivalent of Kierstead's construction
given in Figure \ref{FIG:Extremal4}. It shows that we cannot hope
to find a small set of vertices that monochromatically dominate
significantly more than $3/5$ of the vertices. Also regardless of
the size of the dominating set we can at most guarantee that $\lceil
3n/5\rceil$ vertices will be strongly monochromatically dominated in
an $n$ vertex graph.

By applying Chernoff's bound it is easy to see that a typical random
$4$-colouring on an $n$ vertex graph contains no $3$-sets that
monochromatically dominate more than $(1-(3/4)^3)n+o(n)$ vertices,
which is less than $3n/5$ when $n$ is large. So the minimal possible
dominating set size is $4$.

\begin{figure}[tbp]
\begin{center}
\ifdefined\ColourDiagrams
\includegraphics[height=4cm]{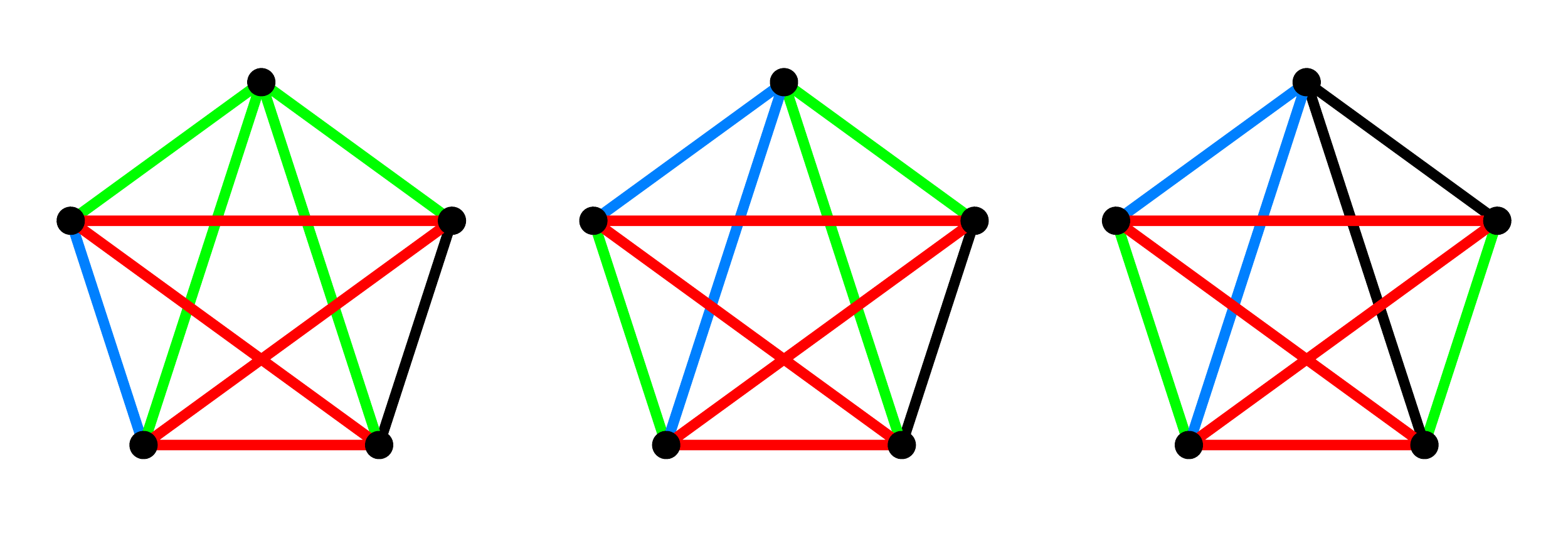}
\caption{Canonical members of a family of $4$-coloured complete
graphs that replace $X$.}
\else
\includegraphics[height=4cm, width=13cm]{5Vertex_grey}
\caption{Canonical members of a family of $4$-coloured complete
graphs that replace $X$. The $4$ colours are represented by solid
black, dashed black, solid grey, and dashed grey lines.}
\fi
\label{FIG:5Vertex}
\end{center}
\end{figure}

We could not prove that there always exists a $4$-set that strongly
monochromatically dominates $3/5$ of the vertices in a complete
$4$-coloured graph. However, by generalising the method given in
Section \ref{SEC:ThmProof}, and replacing $X$ with a specific family
of graphs, we were able to show that there exist $4$-sets that
strongly monochromatically dominate $0.5711$ of the vertices. The family of
graphs we chose to bound instead of $X$ are the $48$ graphs on $5$
vertices that are, up to a permutation of colours, isomorphic to one
of those given in Figure \ref{FIG:5Vertex}. It is not immediately
obvious why such a family should have a positive density in a
counterexample, so we will outline why this is the case.

\begin{lemma}\label{LEM:4Colour}
Any $4$-coloured complete graph $\hat{G}$ on $k$ vertices that has the property that
every set of $3$ vertices strongly $c$-dominates strictly less than
$3/5$ of the vertices for every colour $c$, must contain either
\begin{enumerate}
\item[(i)] a vertex $u$ with $|A_u|=4$, or
\item[(ii)] two vertices $v, w$ with $|A_v|=|A_w|=3$ and $A_v\neq A_w$.
\end{enumerate}
\end{lemma}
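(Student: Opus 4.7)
We argue by contrapositive: assuming $\hat{G}$ contains no vertex with $|A_v|=4$ and any two vertices with $|A_v|=3$ share the same colour set, we exhibit a $3$-set strongly $c$-dominating at least $3k/5$ vertices for some colour $c$. Exactly as in Lemma~\ref{LEM:HasRainbow}, a vertex with $|A_v|=1$ immediately yields a $3$-set strongly $c$-dominating $V(\hat{G})$, so we may assume $|A_v|\in\{2,3\}$ for every vertex. The proof splits according to whether some vertex has $|A_v|=3$.

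\textbf{Case 1: every $|A_v|=2$.} Partition $V(\hat{G})$ into classes $V_{ij}=\{v:A_v=\{i,j\}\}$. No colour is available to label an edge between $V_{ij}$ and $V_{kl}$ when $\{i,j\}\cap\{k,l\}=\emptyset$, so the non-empty classes correspond to pairwise-intersecting $2$-subsets of $\{1,2,3,4\}$. These either all share a common colour $c$ (\emph{star case}) or form a triangle $V_{ij},V_{ik},V_{jk}$ on three colours (\emph{triangle case}). In the star case a transversal $3$-set strongly $c$-dominates all of $V(\hat{G})$. In the triangle case the transversal $\{v_{ij},v_{ik},v_{jk}\}$ strongly $c$-dominates $V(\hat{G})\setminus V_{c'c''}$ for each of the three colours $c\in\{i,j,k\}$, so by pigeonhole some colour covers at least $2k/3\geq 3k/5$ vertices. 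Configurations with only one or two non-empty classes either reduce to a $2$-coloured graph via Theorem~\ref{THM:2Col} (which gives at least $7k/8$) or are dominated wholly by the colour shared by the two non-empty classes.

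\textbf{Case 2: some $|A_v|=3$.} By hypothesis the common triple is (WLOG) $\{1,2,3\}$; set $U=\{v:A_v=\{1,2,3\}\}$, $W=V(\hat{G})\setminus U$, $W_4^i=\{w:A_w=\{4,i\}\}$, $W_4=\bigcup_i W_4^i$, and $W^{jk}=\{w:A_w=\{j,k\}\subseteq\{1,2,3\}\}$. Every $w\in W$ has $|A_w|=2$; colour-$4$ edges live only inside $W_4$; and the compatibility condition $A_u\cap A_v\neq\emptyset$ forces $W^{jk}=\emptyset$ whenever some $W_4^i$ with $i\notin\{j,k\}$ is non-empty. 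If $W_4=\emptyset$, apply Theorem~\ref{THM:MainResult} to the $3$-coloured graph $\hat{G}$ to obtain a $3$-set strongly dominating $2k/3\geq 3k/5$ vertices. Otherwise, for $|U|\geq 2$ and any index $i$ with $W_4^i\neq\emptyset$, consider the $3$-set $S_i=\{u_1,u_2,v_i\}$ with $u_1,u_2\in U$ and $v_i\in W_4^i$. A direct edge-colour check shows that $S_i$ strongly $i$-dominates every vertex outside $W_4\setminus W_4^i$: for $b\in U$ the edge $bv_i$ has colour $i$; for $b\in W_4^i$ both $bu_1,bu_2$ have colour $i$; and for $b\in W^{jk}$ (which forces $i\in\{j,k\}$) the edge $bv_i$ has colour $i$. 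Hence $S_i$ covers $k-\sum_{j\neq i}|W_4^j|$ vertices. Choosing $i$ to maximise $|W_4^i|$ gives $\sum_{j\neq i}|W_4^j|\leq 2|W_4|/3$, so $S_i$ works whenever $|W_4|\leq 3k/5$; when $|W_4|>3k/5$ a transversal of the non-empty $W_4^j$'s strongly $4$-dominates $W_4$. The degenerate configurations ($|U|\leq 1$ or singleton $W_4^i$) are handled by analogous $3$-sets such as $\{u,v_i,w\}$ with $w\in W^{12}\cup W^{13}$.

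\textbf{Main obstacle.} The principal technical work is the Case~2 bookkeeping: reconciling the regime where $|W_4|$ is large (so that a transversal of $W_4$ delivers $4$-domination) with the regime where $|W_4|$ is small (so that $S_i$ delivers $i$-domination), and separately verifying the $|U|\leq 1$ sub-configurations. Once the candidate $3$-sets above are identified, the remaining checks reduce to straightforward edge-colour computations driven by the structural constraints on $W_4^i$ and $W^{jk}$.
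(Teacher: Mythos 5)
Your proposal is correct and reaches the same conclusion as the paper, but via a somewhat different route, mainly in Case~2. Both proofs partition $V(\hat{G})$ by the sets $A_v$ and observe that classes with disjoint index sets cannot both be non-empty. For the all-$|A_v|=2$ situation, the paper implicitly lands in your ``star'' case by repeatedly invoking the fact that $\hat{G}$ cannot be $3$-coloured; your explicit classification of pairwise-intersecting $2$-subsets of $\{1,2,3,4\}$ into stars and triangles is a cleaner packaging, with the triangle case settled by a direct pigeonhole over $2k/3$ rather than a second appeal to Theorem~\ref{THM:MainResult}. For the $|A_v|=3$ case the paper enumerates which of $V_{14},V_{24},V_{34}$ (and residually $V_{12},V_{13},V_{23}$) are non-empty and closes each branch separately, ending with the size dichotomy $|V_{123}\cup V_{14}|\geq 3k/5$ or $|V_{14}\cup V_{24}\cup V_{34}|\geq 3k/5$; you instead construct an explicit candidate set $S_i$, observe that $S_i$ strongly $i$-dominates everything except $W_4\setminus W_4^i$, and then optimize over $i$. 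That counting argument is more uniform and is a genuine simplification of the final subcase of the paper's proof.

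Two small points deserve tightening. First, the statement ``when $|W_4|>3k/5$ a transversal of the non-empty $W_4^j$'s strongly $4$-dominates $W_4$'' fails if only one $W_4^j$ is non-empty (a single vertex of $W_4^j$ does not strongly $4$-dominate its own class, since internal edges may be coloured $j$); but in that situation $\sum_{j\neq i}|W_4^j|=0$ and $S_i$ already covers all of $V(\hat{G})$, so the two branches should be delimited by $\sum_{j\neq i}|W_4^j|\leq 2k/5$ versus $>2k/5$ (which forces at least two non-empty classes) rather than by $|W_4|\lessgtr 3k/5$. Second, the restriction $|U|\geq 2$ and the paragraph on ``degenerate configurations'' are unnecessary: the pair $\{u_1,v_i\}$ alone already strongly $i$-dominates the claimed set (every check you list uses only $u_1$ and $v_i$), and any $3$-set extending it inherits the domination; since Case~2 assumes $U\neq\emptyset$ there is nothing left to handle. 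With those adjustments the proof is clean and complete.
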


\begin{proof}
Let the set of colours be $\{1,2,3,4\}$. Trivially $\hat{G}$ cannot
contain a vertex $u$ with $|A_u|=1$, otherwise any set containing
$u$ will strongly dominate all the vertices. It is therefore sufficient
to show that no $\hat{G}$ can exist in which every vertex $u$ satisfies
$|A_u|=2$, or $A_u = \{1,2,3\}$.

We can take our vertices and partition them into disjoint classes
based on their value of $A_v$. For ease of notation we will refer to
these classes by $V_S$ where $S$ is a subset of the colours, for
example $V_{13}$ contains all the vertices $v$ which have $A_v =
\{1,3\}$. We will split our argument into multiple cases depending
on whether or not a class is empty. Throughout we will make use of
the fact that $\hat{G}$ cannot be $3$-coloured otherwise by Theorem
\ref{THM:MainResult} we can find a $3$-set which dominates over
$3/5$ of the vertices. Also note that if $S,T\subseteq\{1,2,3,4\}$
and $S\cap T=\emptyset$ then either $V_S$ or $V_T$ must be empty as
any edge that goes between the two classes must have a colour in
$S\cap T$.

Suppose $V_{123}=\emptyset$. Without loss of generality we can
assume $V_{12}\neq\emptyset$ (implying $V_{34}=\emptyset$). There
must be another non-empty class otherwise $\hat{G}$ is $3$-coloured.
Without loss of generality we may assume $V_{13}\neq\emptyset$
(implying $V_{24}=\emptyset$). To avoid being $3$-coloured we must
have $V_{14}\neq\emptyset$ (implying $V_{23}=\emptyset$). There are
no more classes we could add and $\hat{G}$ has all its vertices
strongly $1$-dominated by a $2$-set containing a vertex from
$V_{12}$ and a vertex from $V_{13}$ which is a contradiction.

Suppose $V_{123}\neq\emptyset$. To avoid being $3$-coloured at least
one of $V_{14}, V_{24}, V_{34}$ must be non-empty. Without loss of
generality assume $V_{14}\neq\emptyset$ (implying
$V_{23}=\emptyset$). To avoid having every vertex strongly
$1$-dominated by a $2$-set containing a vertex from $V_{123}$ and a vertex from $V_{14}$, either $V_{24}$ or $V_{34}$ must be non-empty. Without
loss of generality assume $V_{24}\neq\emptyset$ (implying
$V_{13}=\emptyset$). If $V_{34}=\emptyset$, then the vertices in
$\hat{G}$ are partitioned into three disjoint classes $V_{123}\cup
V_{12}$, $V_{14}$, and $V_{24}$. We can strongly $c$-dominate at
least $2/3$ of the vertices by choosing $c$ to be the colour of the
edges that go between the largest two of the classes and by choosing
our dominating set to contain a vertex from each of the largest two
classes.

The only case left to consider is when $V_{123}\neq\emptyset$,
$V_{14}\neq\emptyset$, $V_{24}\neq\emptyset$, and
$V_{34}\neq\emptyset$. (Note that $\hat{G}$ resembles Figure
\ref{FIG:Extremal4}.) We may suppose that $|V_{14}|\geq |V_{24}|\geq |V_{34}|$ and so either $|V_{14}\cup V_{24}\cup V_{34}|\geq 3k/5$ (where $k$ is the order of $\hat{G}$) and a $2$-set containing a vertex from each of $V_{14}$ and $V_{24}$ will strongly $4$-dominate at least $3/5$ of the vertices or $|V_{123}\cup V_{14}|\geq 3k/5$ and a $2$-set containing a vertex from each of $V_{123}$ and $V_{14}$ will strongly $1$-dominate at least $3/5$ of the vertices. 
\end{proof}

\begin{corollary}
Let $\hat{G}$ be a $4$-coloured complete graph on $k$ vertices that
has the property that every set of $3$ vertices strongly
$c$-dominates strictly less than $3/5$ of the vertices for every
colour $c$. If $G_n$ is constructed from $\hat{G}$ as before then, with probability $1-o(1)$,
\[
d_F(G_n)\geq k^{-|V(F)|}4^{-|E(F)|}+o(1)
\]
holds for some graph $F$ that is, up to a permutation of colours,
isomorphic to one of the graphs given in Figure \ref{FIG:5Vertex}.
\end{corollary}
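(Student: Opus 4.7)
The plan is to mirror the proof of Corollary \ref{COR:HasRainbow}, using Lemma \ref{LEM:4Colour} in place of Lemma \ref{LEM:HasRainbow}. Lemma \ref{LEM:4Colour} supplies one of two structural features inside any such $\hat{G}$: either (i) a vertex $u\in V(\hat{G})$ with $|A_u|=4$, or (ii) two vertices $v,w\in V(\hat{G})$ with $|A_v|=|A_w|=3$ and $A_v\neq A_w$. In each case I would exhibit an explicit collection of vertex classes in the blow-up $G_n$ in which a suitable graph $F$ from the family depicted in Figure \ref{FIG:5Vertex} appears as an induced subgraph with controllable probability, and then pass to a density lower bound via Chernoff's inequality exactly as was done for the single class $V_u$ in the $3$-colour setting.

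For case (i) the argument is almost verbatim that of Corollary \ref{COR:HasRainbow}: the class $V_u\subseteq V(G_n)$ has $n$ vertices whose $\binom{n}{2}$ internal edges are independently and uniformly coloured from $\{1,2,3,4\}$. A uniformly random $5$-subset of $V(G_n)$ lies in $V_u$ with probability $k^{-5}+o(1)$, and conditional on this any prescribed $4$-edge-colouring of $K_5$ appears with probability $4^{-10}$. Summing over the labellings that yield a fixed $F$ in the family and applying Chernoff's bound to concentrate the count of induced $F$-copies gives $d_F(G_n)\geq k^{-5}4^{-10}+o(1)$ with probability $1-o(1)$.

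For case (ii) I would select $a$ vertices from $V_v$ and $5-a$ from $V_w$ for some $a\in\{1,2,3,4\}$. The $a(5-a)$ cross-edges are all deterministically coloured with $c^\ast$, the (unique) colour of the edge $vw$ in $\hat{G}$, which necessarily lies in $A_v\cap A_w$; the $\binom{a}{2}$ edges inside $V_v$ are i.i.d.\ uniform on $A_v$ and the $\binom{5-a}{2}$ edges inside $V_w$ are i.i.d.\ uniform on $A_w$. The point of the family in Figure \ref{FIG:5Vertex} is that it is closed under exactly these blow-up configurations: for every choice of distinct $3$-subsets $A_v,A_w\subseteq\{1,2,3,4\}$ and every $c^\ast\in A_v\cap A_w$, some admissible split $a$ and some admissible within-class colouring realise a graph $F$ that is, up to a permutation of colours, one of the graphs in Figure \ref{FIG:5Vertex}. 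Because each within-class colour is prescribed with probability $3^{-1}>4^{-1}$ and the cross-colour is prescribed with probability $1$, the probability of realizing the chosen $F$ on a fixed $5$-labelling in $V_v\cup V_w$ is at least $3^{-\binom{a}{2}-\binom{5-a}{2}}\geq 4^{-10}$, so the same Chernoff argument as in case (i) yields the required bound $d_F(G_n)\geq k^{-5}4^{-10}+o(1)$.

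The main obstacle is purely combinatorial: one must verify that the $48$-graph family in Figure \ref{FIG:5Vertex} really does cover every configuration arising in case (ii). This amounts to a finite case analysis in the six unordered pairs $\{A_v,A_w\}$ of distinct $3$-subsets of $\{1,2,3,4\}$ and the two choices of $c^\ast\in A_v\cap A_w$, checking that for each such tuple a suitable $a$ and a suitable within-class colouring produce a graph whose colour pattern is isomorphic, up to a permutation of colours, to a graph in the figure. Once this finite check is done, the probabilistic transfer to $d_F(G_n)$ proceeds in exact parallel with the proof of Corollary \ref{COR:HasRainbow}, so no further difficulty arises.
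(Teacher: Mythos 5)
Your case (i) mirrors the paper exactly and is fine. Your case (ii), however, diverges from the paper in a way that almost certainly does not work. You propose to find a $5$-vertex copy of some $F$ from Figure~\ref{FIG:5Vertex} entirely inside $V_v\cup V_w$, splitting the five vertices as $a$ from $V_v$ and $5-a$ from $V_w$, with all $a(5-a)$ cross-edges forced to colour $c^\ast=c_{vw}$. The paper does something different: it first uses the counterexample hypothesis to produce a \emph{third} vertex $z\in V(\hat G)$ with $c_{vz}\neq c_{vw}$ and $c_{wz}\neq c_{vw}$ --- because $\{v,w\}$ cannot strongly $c_{vw}$-dominate everything --- and then looks at $5$-sets consisting of one vertex from $V_z$, two from $V_v$, and two from $V_w$. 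The within-class edges (one in $V_v$, one in $V_w$) are forced with probability $1/3$ each to take the unique colour of $A_v\setminus\{c_{vw},c_{vz}\}$, respectively $A_w\setminus\{c_{vw},c_{wz}\}$, so that the two vertices inside $V_v$ (resp.\ $V_w$) have type $A_v$ (resp.\ $A_w$) \emph{inside the induced $5$-vertex graph}. That $1+2+2$ structure, with exactly one forced interior edge on each side and the vertex $z$ distinguishing $c_{vz}$ from $c_{wz}$, is precisely what the graphs of Figure~\ref{FIG:5Vertex} encode.

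The gap in your version is therefore not just an unperformed finite check: the check would fail. With only two classes and $a=1$, the vertex taken from $V_v$ has all four of its incident edges coloured $c^\ast$, so its type inside the $5$-vertex graph has size $1$; with $a=2$ or $a=3$ the cross-edge colour $c^\ast$ dominates each side's incidence set far more heavily than in the three-class picture. None of these bipartite blow-ups reproduce the two-vertices-of-type-$A_v$, two-of-type-$A_w$, one-distinguishing-vertex structure that makes the Figure~\ref{FIG:5Vertex} family the right target. The missing ingredient is the use of the counterexample property of $\hat G$ to extract the third vertex $z$; without it you cannot match the family, and the chosen family was built around the resulting three-class configuration.
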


\begin{proof}
By Lemma \ref{LEM:4Colour} $\hat{G}$ must have a vertex $u$ with
$|A_u|=4$ or two vertices $v,w$ with $|A_v|=|A_w|=3$ and $A_v\neq
A_w$. If we have a vertex $u$ with $|A_u|=4$ there will be a vertex
class of size $n$ in $G_n$ with all its edges coloured uniformly at
random. The result trivially holds by considering the density of any
$5$ vertex graph inside this vertex class.

Suppose instead there exist two vertices $v,w$ in $\hat{G}$ with
$|A_v|=|A_w|=3$ and $A_v\neq A_w$. To ease notation let $c_{xy}$ be
the colour of the edge $xy$. Note that by the definition of
$\hat{G}$ we know that every vertex is not strongly
$c_{vw}$-dominated by the set $\{v,w\}$. Consequently there must
exist a vertex $z$ such that $c_{vz}\neq c_{vw}$ and $c_{wz}\neq
c_{vw}$. Now consider the vertex classes $V_v$, $V_w$, and $V_z$ in
$G_n$. There are $9$ possible $5$ vertex graphs that could be formed
from taking one vertex in $V_z$ and two vertices in $V_v$ and $V_w$.
Only one of these $9$ graphs has the property that the two vertices
$v_1, v_2$ chosen from $V_v$ satisfy $A_{v_1}=A_{v_2}=A_v$ and the
two vertices $w_1, w_2$ chosen from $V_w$ satisfy
$A_{w_1}=A_{w_2}=A_w$. This graph is, up to a permutation of
colours, isomorphic to one of those given in Figure
\ref{FIG:5Vertex}. The result trivially follows by considering its
density in $V_v\cup V_w\cup V_z$.
\end{proof}

Although our discussion has centred on $4$-colourings, it would also
be interesting to know what happens for complete graphs which are
$r$-coloured for $r\geq 5$.

\end{document}